\title{Witt vectors and truncation posets}
\author{Vigleik Angeltveit}
\email{vigleik.angeltveit@anu.edu.au}
\address{Mathematical Sciences Institute \\
Australian National University \\
Acton, ACT 2601 \\
Australia}
\keywords{Witt vectors, truncation posets, Tambara functors}
\newtheorem{theorem}{Theorem}[section]
\newtheorem{lemma}[theorem]{Lemma}
\newtheorem{proposition}[theorem]{Proposition}
\theoremstyle{definition}
\newtheorem{definition}[theorem]{Definition}
\newtheorem{remark}[theorem]{Remark}
\newtheorem{example}[theorem]{Example}
             \newcommand{\bN}{\mathbb{N}}  
      \newcommand{\bW}{\mathbb{W}}   \newcommand{\bZ}{\mathbb{Z}}
\newcommand{\xto}{\xrightarrow}
\newcommand{\xfrom}{\xleftarrow}
\newcommand{\wh}{\widehat}
\newcommand{\op}{{\mathrm{op}}}
\newcommand{\join}{\textnormal{join}}
\newcommand{\TP}{\mathcal{TP}}
\begin{document}

\begin{abstract}
One way to define Witt vectors starts with a truncation set $S \subset \bN$. We generalize Witt vectors to truncation posets, and show how three types of maps of truncation posets can be used to encode the following six structure maps on Witt vectors: addition, multiplication, restriction, Frobenius, Verschiebung and norm.
\end{abstract}

\maketitle

\section{Introduction}
Classically the theory of Witt vectors comes in two flavors, the $p$-typical Witt vectors $W(k;p)$ and the big Witt vectors $\bW(k)$. Those are special cases of Witt vectors defined using a truncation set $S$, and the extra flexibility coming from varying the truncation set has proven quite useful.

In this paper we take the use of truncation sets one step further by introducing \emph{truncation posets}, and redevelop the foundations of Witt vectors from this point of view. The existence of all the usual structure maps of Witt vectors is easy to establish using this formalism. We give explicit formulas on ghost coordinates and isolate all the necessary congruences in a single lemma due to Dwork (Lemma \ref{l:Dwork}).

Recall that a \emph{truncation set} is a set $S \subset \bN=\{1,2,\ldots\}$ which is closed under division. Given a truncation set $S$ and a commutative ring $k$, one can define the ring $\bW_S(k)$ of Witt vectors. As a set this is $k^S$, and the addition and multiplication maps are determined by requiring that the ghost map $w : \bW_S(k) \to k^S$ is a ring map, functorially in the ring $k$. With $S=\{1,p,p^2,\ldots\}$ this recovers the $p$-typical Witt vectors and with $S=\bN$ it recovers the big Witt vectors.

In recent work related to various algebraic $K$-theory calculations more general truncation sets have come up. For example, when studying the algebraic $K$-theory of the ring $k[x_1,\ldots,x_n]/(x_1^{a_1},\ldots,x_n^{a_n})$ in \cite{AGHL14} it turned out to be natural to consider certain subsets of $\bN^n$, and in \cite{An15} where we calculate the algebraic $K$-theory of $k \langle x_1,\ldots,x_n \rangle/m^a$, the polynomial ring in $n$ non-commuting variables modulo the $a$'th power of the ideal $m=(x_1,\ldots,x_n)$, we were led to consider certain subsets of the set of words in $n$ letters.

It is always possible to unpack truncation poset Witt vectors as a product of classical Witt vectors in a unique way (up to permutations), and to describe the maps in terms of the structure maps of classical Witt vectors. But this unpacking is messy, and naturally defined maps of truncation poset Witt vectors have to be divided into cases when considering only the classical Witt vectors. We claim that by considering truncation posets the above-mentioned $K$-theory calculations become somewhat easier to carry out, and the results become significantly easier to state.

For example, when considering the truncation posets in $\bN^n$ from \cite{AGHL14}, the Verschiebung $V^i_r : \bW_{S/(1,\ldots,r,\ldots,1)}(k) \to \bW_S(k)$ splits as a product of Verschiebung maps on each classical Witt vector factor, but which $V_j$ is used on which factor is somewhat complicated (compare \cite[Prop.\ 2.7]{AGHL14}). We will return to another example in \cite{An15}, where we calculate $K_*(A,m)$ for $A=k\langle x_1,\ldots,x_n \rangle/m^a$ when $k$ is a perfect field of positive characteristic. We find that the $K$-theory in degree $2q-1$ and $2q$ sit in an exact sequence
\[
 0 \to K_{2q}(A,m) \to \bW_{S_n(a,aq)}(k) \xto{V^1_a} \bW_{S_n(1,aq)}(k) \to K_{2q-1}(A,m) \to 0
\]
for certain truncation posets $S_n(a,aq)$ and $S_n(1,aq)$ (see also Example \ref{ex:words2} and \ref{ex:wordmap} in this paper). Compare 
this to the description in the case $a=2$ found by Lindenstrauss and McCarthy \cite[Theorem 7.3 andf 7.4]{LiMc}.

Given a truncation poset $S$ as in Definition \ref{d:gentrun} below and a commutative ring $k$, we will define the $S$-Witt vectors $\bW_S(k)$ to be $k^S$ as a set. We will then make the collection of truncation posets into a category, and $S \mapsto \bW_S(k)$ into a functor, in a number of different ways by considering three types of maps.

The first type of map, which we call an $R$-map, is most general. Given an $R$-map $f : S \to T$ of truncation posets we get an induced map $f^* : \bW_T(k) \to \bW_S(k)$. By varying $S$, $T$ and $f$ this recovers all composites of the classical restriction and Frobenius maps, as well as diagonal maps. Classically the restriction and Frobenius maps are defined in rather different ways, so it is perhaps surprising that the two definitions can be unified in this way.

The second type of map, which we call a $T$-map, is an $R$-map satisfying certain additional conditions. Given a $T$-map $f : S \to T$ of truncation posets we get an induced map $f_\oplus : \bW_S(k) \to \bW_T(k)$, and by varying $S$, $T$ and $f$ this recovers all composites of the addition map and Verschiebung maps on the classical Witt vectors.

We can combine these two kinds of maps and define a category $\TP^{TR}$. An object of $\TP^{TR}$ is a truncation poset, and a morphism is an equivalence class of spans
\[
 S \xfrom{f} A \xto{g} T
\]
where $f$ is an $R$-map and $g$ is a $T$-map. This is similar to the definition of a $G$-Mackey functor in terms of spans of finite $G$-sets for a finite group $G$. See Theorem \ref{t:TRfunctor} in the body of the paper.

Finally, we define a third type of map of truncation posets that we call an $N$-map. This is an $R$-map satisfying certain (stronger) additional conditions. Given an $N$-map $f : S \to T$ we get an induced map $f_\otimes : \bW_S(k) \to \bW_T(k)$ which encodes all composites of the multiplication map and norm maps on the classical Witt vectors.

We can combine all three kinds of maps to define a category $\TP^{TNR}$ of truncation posets with transfer, norm and restriction. We then have the following result, which we also restate as Theorem \ref{t:mainbody}.

\begin{theorem} \label{t:main}
Let $k$ be a commutative ring. There is a functor
\[
 \bW(k) : \TP^{TNR} \to Set
\]
given on objects by $S \mapsto \bW_S(k)$ which encodes all addition, multiplication, restriction, Frobenius, Verschiebung and norm maps of ordinary Witt vectors.
\end{theorem}

While one can argue that some category encoding all of this information must exist for formal reasons, our category $\TP^{TNR}$ has a very concrete description in terms of generators and relations, and it is easy to perform calculations on ghost coordinates.

We make some remarks.

\begin{remark}
The norm map is perhaps less classical than the other maps encoded by $\TP^{TNR}$. It can be thought of as a multiplicative version of the Verschiebung. Its existence can be deduced from Brun's paper \cite{Br05}, but see \cite{An_norm} for a concrete definition with explicit formulas.
\end{remark}

\begin{remark} \label{r:Tambarabispans}
The machinery developed in this paper is similar in flavor to that of \emph{Tambara functors} (see \cite{Ta93} or \cite{St}). In fact, Tambara called what has become known as a Tambara functor a $TNR$-functor. But there are some differences.

First, there is no analogue of the restriction map in the context of equivariant stable homotopy theory unless one is willing to consider \emph{cyclotomic spectra}. What topologists usually refer to as a restriction map corresponds to the Frobenius map of Witt vectors. To avoid confusion we will avoid the conflicting terminology from algebraic topology in this paper, although we do borrow the acronym $TNR$.

And second, a Tambara functor can be defined as a functor from the category of \emph{bispans}
\[
 X \leftarrow A \to B \to Y,
\]
of finite $G$-sets, and while the definition of composition of two bispans is somewhat complicated it is possible to represent any composite of restrictions (which we should call Frobenius), norms and transfers, in any order, as a bispan. In our case $\TP^{TNR}$ is also built from three types of maps, but it is not true that any map in $\TP^{TNR}$ can be represented by a bispan.

One might argue that this indicates that our definition of a truncation poset is too general. We remedy this by defining a subcategory $\TP^{TNR}_\join$ containing only certain especially nice truncation posets, and show that any map in $\TP^{TNR}_\join$ can indeed be represented by a bispan. But note that the truncation posets that show up in $K$-theory calculations are not usually in $\TP^{TNR}_\join$.
\end{remark}

\subsection{Outline}
We start in Section \ref{s:gentrun} by defining the main new player, the truncation poset. In Section \ref{s:genWitt} we describe how to generalize Witt vectors from ordinary truncation sets to truncation posets, and explain how $S \mapsto \bW_S(k)$ defines a functor out of each of the three categories $\TP^T$, $\TP^N$ and $(\TP^R)^\op$.

In Section \ref{s:Mackey} we combine the category $\TP^T$ with $(\TP^R)^\op$ by considering the category freely generated by maps in $\TP^T$ and $(\TP^R)^\op$, modulo certain explicit relations. Any map in $\TP^{TR}$ can be described by a \emph{span} of truncation posets where the first leg is in $\TP^R$ and the second leg is in $\TP^T$. Then $S \mapsto \bW_S(k)$ becomes a functor from $\TP^{TR}$ to sets. This is similar in flavor to the definition of a Mackey functor.

In Section \ref{s:Tambara}, which is significantly more difficult both because of the difficulty with commuting an $R$-map past an $N$-map and because of the combinatorics involved in defining an exponential diagram, we combine all three of the categories $\TP^T$, $\TP^N$ and $(\TP^R)^\op$ and show that $S \mapsto \bW_S(k)$ is a functor from $\TP^{TNR}$ to sets. This is similar in flavor to the definition of a Tambara functor, but see Remark \ref{r:Tambarabispans} above.

Finally, in Section \ref{s:bispans} we show that if we restrict our attention to certain especially nice truncation posets we can define a category $\TP^{TNR}_\join$ where every morphism can in fact be represented, in an essentially unique way, by a bispan of truncation posets. We finish by comparing functors out of a particular subcategory of $\TP^{TNR}_\join$ to Tambara functors for a finite cyclic group.

\subsection{Acknowledgements}
The definition of a truncation poset was inspired by the author's joint work with Gerhardt, Hill and Lindenstrauss \cite{AGHL14}, and by our subsequent attempt at finding a common generalization of the $n$-dimensional Witt vectors we introduced in \cite{AGHL14} and the Dress-Siebeneicher Witt vectors from \cite{DrSi88}. One approach we outlined contained a definition that was quite similar to the definition of a truncation poset presented here. 

This paper was also inspired by the author's joint work with Anna Marie Bohmann on graded Tambara functors \cite{AnBo}, and by conversations with Ayelet Lindenstrauss and Lars Hesselholt about algebraic $K$-theory calculations. We have also borrowed some of the Witt vector formalism from Hesselholt's article \cite{He15}. The author would also like to thank James Borger and Arnab Saha for interesting conversations about Witt vectors, and Chuck Weibel for suggesting the name truncation poset.

This work was supported by the Australian Research Council grant DP120101399.

\section{Truncation posets and maps between them} \label{s:gentrun}
If $S$ is a partially ordered set we write $s \mid t$ rather than $s \leq t$ for the partial order. We will consider $\bN$ as a partially ordered set ordered by division.

\subsection{Ordinary truncation sets and classical Witt vectors}
We will refer to anything defined in terms of ordinary truncation sets as ``classical''. With the exception of the norm map, this material is well known. See for example \cite[Section 1]{He15}. Recall that $S \subset \bN$ is a truncation set if $s \in S$ and $t \mid s$ implies $t \in S$. For a commutative ring $k$, the ring of $S$-Witt vectors $\bW_S(k)$ is defined to be $k^S$ as a set. The addition and multiplication maps are defined by the requirement that the ghost map
\[
 w : \bW_S(k) \to k^S
\]
defined by
\[
 (a_s) \mapsto \langle x_s \rangle \qquad x_s = \sum_{d \mid s} da_d^{s/d}
\]
is a ring map, functorially in the ring $k$. We make the standing assumption that everything in this paper is required to be functorial in $k$.

We will need the following constructions. If $n \in \bN$ and $S$ is a truncation set, let
\[
 S/n = \{t \in \bN \quad | \quad nt \in S\}.
\]
This is another truncation set. The classical Frobenius map is defined as the map
\[
 F_n : \bW_S(k) \to \bW_{S/n}(k)
\]
which is given on ghost coordinates by $\langle x_s \rangle \mapsto \langle y_t \rangle$ with $y_t = x_{nt}$.

There is also a map going the other way. The classical Verschiebung map is defined as the map
\[
 V_n : \bW_{S/n}(k) \to \bW_S(k).
\]
which is given on Witt coordinates by $(b_t) \mapsto (a_s)$ with $a_s = b_{s/n}$ if $n \mid s$ and $0$ if $n \nmid s$. Alternatively it can be defined on ghost coordinates by $\langle y_t \rangle \mapsto \langle x_s \rangle$ with $x_s = ny_{s/n}$ if $n \mid s$ and $0$ if $n \nmid s$.

For $n \in \bN$ we let $\langle n \rangle$ denote the truncation set of divisors of $n$. Given a truncation set $S$ we get another truncation set
\[
 \langle n \rangle S = \{t \in \bN \quad | \quad t=es \textnormal{ for some $e \mid n$, $s \in S$}\}.
\]
It follows immediately that $(\langle n \rangle S)/n = S$, so in particular we have a Verschiebung map $V_n : \bW_S(k) \to \bW_{\langle n \rangle S}(k)$. But by \cite{An_norm} we also have a norm map (the ``classical'' norm)
\[
 N_n : \bW_S(k) \to \bW_{\langle n \rangle S}(k).
\]
This can be defined on ghost coordinates by $\langle x_s \rangle \mapsto \langle y_t \rangle$ where
\[
 y_t = x_{t/g}^g, \quad g=\gcd(n,t).
\]

Finally, if $T \subset S$ is another truncation set there is a classical restriction map
\[
 R^S_T : \bW_S(k) \to \bW_T(k).
\]
This can be defined either on Witt coordinates by $(R^S_T(a_s))_t = a_t$ or on ghost coordinates by $(R^S_T \langle x_s \rangle)_t = x_t$.

Note that specifying $n \in \bN$ and the source $\bW_S(k)$ uniquely determines the target of the norm map $N_n$ but not the target of the Verschiebung map $V_n$. However, given a truncation set $T$ with $T/n=S$ it is true that $\langle n \rangle S \subset T$, and that the diagram
\[ \xymatrix{
 \bW_S(k) \ar[r]^-{V_n} \ar[rd]_-{V_n} & \bW_T(k) \ar[d]^{R^T_{\langle n \rangle S}} \\
 & \bW_{\langle n \rangle S}(k)
} \]
commutes. Given $(a_s) \in \bW_S(k)$, the image $V_n(a_s) \in \bW_T(k)$ is padded with zeroes. But because the formula for $N_n(a_n)$ is more complicated, padding with zeroes does not work in this case. See Example \ref{ex:notcommutingRpastN} below for a concrete example of this problem with the norm map.

\subsection{Truncation posets}
Inspired by \cite{AGHL14} and subsequent discussions with Gerhardt, Hill and Lindenstrauss we make the following definition.

\begin{definition} \label{d:gentrun}
A \emph{truncation poset} is a partially ordered set $S$ together with a function
\[
 | - | : S \to \bN
\]
satisfying the following properties.
\begin{enumerate}
 \item If $s \mid t$ then $|s| \mid |t|$.
 \item If $d \mid |s|$ then there is a unique $t \in S$ with $t \mid s$ and $|t| = |s|/d$. In particular there is a unique $t \in S$ with $t \mid s$ and $|t|=1$.
 \item If $s \in S$ and $d \in \bN$ there is at most one $t \in S$ with $s \mid t$ and $|t|=d|s|$.
\end{enumerate}
\end{definition}

In fact all truncation posets split up as a disjoint union.

\begin{lemma} \label{l:splittingofS}
Let $S$ be a truncation poset. Then there is a unique splitting $S = \coprod S_i$ with each $S_i$ isomorphic to an ordinary truncation set via $|-|$.
\end{lemma}

\begin{proof}
Let $I = \{ i \in S \quad | \quad |i|=1\}$. Then it is clear from the definition that $S = \coprod\limits_{i \in I} S_i$ with $S_i = \{s \in S \quad | \quad i \mid s\}$.
\end{proof}

We call each $S_i \subset S$ as in the above lemma a \emph{connected component} of $S$. For ease of notation we will sometimes write $t/s$ for the natural number $\frac{|t|}{|s|}$ and $s/d$ for the unique $t \in S$ with $t \mid s$ and $|t|=|s|/d$. If there is a possibility for confusion we will write $|s|_S$ for $|s|$.

\begin{remark}
Suppose we are given a poset $S$ and a natural number $t/s$ for each $s \mid t$ in $S$. Then there is at most one way to define $|-|$ in such a way that $S$ becomes a truncation poset. Indeed, we must have
\[
 |s|=\max\{s/t \quad | \quad t \mid s\}.
\]
It is not hard to translate the conditions in Definition \ref{d:gentrun} into conditions on the natural numbers $t/s$ for each $s \mid t$ in a poset $S$.
\end{remark}

\begin{definition}
Let $S$ be a truncation poset and let $k$ be a commutative ring. The $S$-Witt vectors of $k$, denoted $\bW_S(k)$, is the set $k^S$. The ghost map is the map
\[
 w : \bW_S(k) \to k^S
\]
sending the vector $(a_s)$ to the vector $\langle x_s \rangle$ with
\[
 x_s = \sum_{t \mid s} |t| a_t^{s/t}.
\]
\end{definition}

\begin{lemma}
If $S$ and $T$ are truncation posets then so is $S \coprod T$, and there is a canonical isomorphism
\[
 \bW_{S \coprod T}(k) \cong \bW_S(k) \times \bW_T(k).
\]
Moreover, this isomorphism is compatible with the canonical isomorphism $k^{S \coprod T} \cong k^S \times k^T$ under the ghost map.
\end{lemma}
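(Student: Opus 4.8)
The plan is to equip $S \coprod T$ with the poset structure in which no element of $S$ is comparable to any element of $T$, and with $|-|$ given by $|-|_S$ on the first summand and $|-|_T$ on the second. The one structural fact I would isolate first and use everywhere is: if $s \mid u$ in $S \coprod T$ then $s$ and $u$ lie in the same summand. Consequently, for any $s \in S \coprod T$, the set $\{t : t \mid s\}$ of divisors of $s$ and the set $\{t : s \mid t\}$ of multiples of $s$ are contained entirely in the summand containing $s$.

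With that in hand I would verify the four conditions of Definition \ref{d:gentrun}. Conditions (1) and (2) concern chains $s \mid t$ and $s \mid t \mid u$, which live in a single summand, so they follow at once from the same conditions for $S$ and for $T$. For (3), given $s$ --- say in the summand $S$ --- and $d \mid |s|$, the element $s/d$ provided by condition (3) for $S$ still works in $S \coprod T$, and it is still unique there because no element of $T$ divides $s$. Condition (4) is handled the same way: every $t$ with $s \mid t$ lies in the same summand as $s$, so uniqueness of such a $t$ with $|t| = d|s|$ is inherited from the relevant summand.

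For the Witt vector statement, the identity $\bW_{S \coprod T}(k) = k^{S \coprod T}$ together with the tautological bijection $k^{S \coprod T} \cong k^S \times k^T$ gives the map $\bW_{S \coprod T}(k) \cong \bW_S(k) \times \bW_T(k)$, which is clearly natural in $k$. To see it is compatible with ghost maps, I would just unwind the defining formula $x_s = \sum_{t \mid s} |t| a_t^{s/t}$: if $s$ lies in the summand $S$, then every $t$ with $t \mid s$ lies in $S$, so $x_s$ depends only on the $S$-coordinates of $(a_s)$ and equals the $s$-th ghost coordinate of the corresponding element of $\bW_S(k)$; symmetrically when $s \in T$. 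Thus under the two identifications the ghost map $w_{S \coprod T}$ becomes $w_S \times w_T$, which is the asserted compatibility.

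I do not expect a genuine obstacle here. The only subtlety worth flagging is that conditions (3) and (4) of Definition \ref{d:gentrun} are a priori existence/uniqueness assertions over all of $S \coprod T$, so one must explicitly observe that the absence of any comparabilities between the two summands reduces them to the corresponding assertions within a single summand; once that is said, the rest is bookkeeping.
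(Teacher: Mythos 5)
Your proof is correct and is exactly the elaboration the paper has in mind; the paper simply says ``This is clear by inspection of the definitions.'' Your observation that divisibility never crosses between summands is precisely the point that makes the inspection work.
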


\begin{proof}
This is clear by inspection of the definitions.
\end{proof}

We will describe the various structure maps that exist in Section \ref{s:genWitt} below. But first we present a series of examples of truncation posets and define the maps of truncation posets we will need.

\begin{example}
An ordinary truncation set $S \subset \bN$ is a truncation poset with $|-|$ defined to be the identity map.
\end{example}

\begin{example} \label{ex:nN}
Let $n \in \bN$. The subset $n \bN \subset \bN$ is a truncation poset with $|s|_{n\bN}=s/n$. The multiplication by $n$ map $\bN \to n \bN$ is an isomorphism of truncation posets.
\end{example}

The next two examples appeared in \cite{AGHL14}, the first one explicitly and the second one implicitly.

\begin{example} \label{ex:N_to_the_n}
We consider $\bN^n$ as a partially ordered set where $(t_1,\ldots,t_n) \mid (s_1,\ldots,s_n)$ if there is some $d \in \bN$ with $(dt_1,\ldots,dt_n) = (s_1,\ldots,s_n)$. A subset $S \subset \bN^n$ which is closed under division in the sense that if $(s_1,\ldots,s_n) \in S$ and $d \mid s_i$ for all $1 \leq i \leq n$ then $(s_1/d,\ldots,s_n/d) \in S$, is a truncation poset with $|(s_1,\ldots,s_n)| = \gcd(s_1,\ldots,s_n)$.
\end{example}

\begin{example} \label{ex:N_to_the_n2}
Fix positive integers $a_1,\ldots,a_n$. A subset $S \subset \bN^n$ which satisfies $a_i \mid s_i$ for all $(s_1,\ldots,s_n) \in S$ and $1 \leq i \leq n$, and which is closed under division by $n$-tuples satisfying the same condition, is a truncation poset with $|s_1,\ldots,s_n|=\gcd(\frac{s_1}{a_1},\ldots,\frac{s_n}{a_n})$.
\end{example}

\begin{example} \label{ex:words}
Put a partial order on the set of words in $n$ letters by saying $w_1 \mid w_2$ if $w_2 = w_1^d$ for some $d \in \bN$. Define $|w_2|=d$ if $w_2 = w_1^d$ with $w_1$ irreducible (meaning $w_1$ is not a power of a shorter word). For example, $|x_1 x_2 x_1 x_2|=2$. Then a set $S$ of words which is closed under division is a truncation poset.
\end{example}

The next example is central to the calculation of the algebraic $K$-theory of a truncated polynomial ring in non-commuting variables, see \cite{An15}.

\begin{example} \label{ex:words2}
Fix a positive integer $a$ and consider words in $n$ letters of length divisible by $a$, modulo the equivalence relation given by cyclically permuting blocks of $a$ letters. A set of such words which is closed under division in the same sense as in the previous example, and with $|-|$ defined in the same way, is a truncation poset. For example, if $a=1$ the words $x_1x_2x_1x_2$ and $x_2x_1x_2x_1$ are equivalent but if $a=2$ or $a=4$ they are not. If $a=1$ or $a=2$ then $|x_1x_2x_1x_2|=2$ but if $a=4$ then $|x_1x_2x_1x_2|=1$. 
\end{example}

We could go on, but we hope the above examples have convinced the reader that truncation posets are in rich supply.

\subsection{Maps of truncation posets}
The structure maps for Witt vectors will come from maps of truncation posets. We make the following definition.

\begin{definition}
A \emph{map} $f : S \to T$ of truncation posets is a map of sets such that if $s_1 \mid s_2$ then $f(s_1) \mid f(s_2)$ and $\frac{|f(s_2)|}{|f(s_1)|} = \frac{|s_2|}{|s_1|}$.
\end{definition}

If $S = \coprod\limits_{i \in I} S_i$ is a splitting into ordinary truncation sets as in Lemma \ref{l:splittingofS} above with $i \in S_i$ and $|i|=1$, then $f : S \to T$ is uniquely determined by $f(i)$ for $i \in I$.

For maximal similarity with the later definitions we will sometimes call a map of truncation posets an $R$-map. It is clear that such maps compose and that we get a category $\TP^R$ of truncation posets and $R$-maps.

\begin{example}
Let $f : T \subset S$ be an inclusion of ordinary truncation sets. Then $f$ is a map of truncation posets.
\end{example}

\begin{example} \label{ex:multn}
Let $S$ be an ordinary truncation set and let $T = n\bN \cap S \subset S$. Then $T$ is a truncation poset as in Example \ref{ex:nN} above and the inclusion $f : T \to S$ is a map of truncation posets. Moreover, the map
\[
 \frac{1}{n} : T \to S/n
\]
is an isomorphism of truncation posets. We will switch back and forth between thinking about the inclusion $n\bN \cap S \subset S$ and the multiplication by $n$ map $S/n \to S$.
\end{example}

\begin{example} \label{ex:multn2}
As a special case of the previous example, let $S$ be an ordinary truncation set. Then the multiplication by $n$ map $S \xto{n} \langle n \rangle S$ is a map of truncation posets.
\end{example}

\begin{example}
Let $S$ be any truncation poset. Then the fold map $\nabla : S \coprod S \to S$ is a map of truncation posets.
\end{example}

Any map of truncation posets will induce a map between Witt vectors, and some maps will induce two or three different maps. For the extra maps we need additional conditions.

\begin{definition} \label{d:Tmap}
A map $f : S \to T$ of truncation posets is a \emph{$T$-map} ($T$ for \emph{transfer}, not for the target of the map) if it satisfies the following additional conditions.
\begin{enumerate}
 \item For every $s \in S$ and $t' \in T$ with $f(s) \mid t'$ there exists an $s' \in S$ with $s \mid s'$ and $f(s')=t'$. \label{cond:fib}
 \item For every $t \in T$ the set $f^{-1}(t)$ is finite.
\end{enumerate}

If $f$ satisfies the first condition (but not necessarily the second) we say that $f$ is a \emph{fibration}.
\end{definition}

It is clear that we get a category $\TP^T$ of truncation posets and $T$-maps. To get a better understanding of $\TP^T$, we first consider ordinary truncation sets.

\begin{lemma} \label{l:Tmapclassical}
Suppose $U$ and $V$ are ordinary truncation sets. Then $g : U \to V$ is a $T$-map if and only if $U=V/n$ and $g$ is multiplication by $n$ as in Example \ref{ex:multn}.
\end{lemma}

\begin{proof}
Let $n=g(1)$, and note that $g$ is injective. We can define a map $g' : U \to V/n$ by $g'(u) = \frac{g(u)}{n}$, and then $g$ factors as $U \xto{g'} V/n \xto{n} V$. The map $g'$ is injective and satisfies $g'(1)=1$. Now condition (\ref{cond:fib}) in Definition \ref{d:Tmap} implies that $g'$ is also surjective.
\end{proof}

\begin{lemma} \label{l:decomposeTmap}
Suppose $f : S \to T$ is a $T$-map of truncation posets. Decompose $S$ and $T$ as $S = \coprod_i S_i$ and $T = \coprod_j T_j$ with each $S_i$ and $T_j$ isomorphic to an ordinary truncation set as in Lemma \ref{l:splittingofS}. Then $f$ is the coproduct of maps
\[
 S_i \xto{f_i} T_j \subset T,
\]
and each $f_i$ is isomorphic to a $T$-map of classical truncation sets of the form $n_i : V_j/n_i \to V_j$.
\end{lemma}

\begin{proof}
This follows immediately from Lemma \ref{l:Tmapclassical}.
\end{proof}

We need one more version of a map of truncation posets.

\begin{definition} \label{d:Nmap}
A map $f : S \to T$ of truncation posets is an \emph{$N$-map} ($N$ for \emph{norm}) if it satisfies the following additional conditions.
\begin{enumerate}
\item For every $s \in S$ and $t' \in T$ with $t'$ in the same connected component as $f(s)$ there exists an $s' \in S$ in the same connected component as $s$ with $t' \mid f(s')$. \label{cond:strongfib}
\item For every $t \in T$ the set
\[
 \widehat{f^{-1}}(t) = \textnormal{minimal elements of } \{s \in S \quad | \quad t \mid f(s)\}
\]
is finite.
\end{enumerate}
\end{definition}

Here an $s \in S$ with $t \mid f(s)$ is \emph{minimal} if there is no $s' \in S$ with $s' \mid s$, $s' \neq s$, and $t \mid f(s')$. The set $\widehat{f^{-1}}(t)$ contains exactly one element in each connected component of $S$ that maps to the connected component of $T$ containing $t$.

We note that being an $N$-map is a stronger condition than being a $T$-map. Indeed, if $f : S \to T$ is an $N$-map and $f(s) \mid t'$ we get an $s''$ in the same connected component as $s$ with $t' \mid f(s'')$. But then $f(s) \mid t' \mid f(s'')$ implies that $s \mid s''$ and that $d = \frac{|f(s'')|}{|t'|} \mid |s''|$. Hence we can let $s' = \frac{s''}{d}$. It follows that $s \mid s'$ and that $f(s')=t'$.

The generalization $\widehat{f^{-1}}(t)$ of inverse image is compatible with composition in the following sense:

\begin{lemma}
Suppose we have a composite of $N$-maps $S \xto{f} T \xto{g} U$. Then the composite
\[
 \textnormal{subsets of $U$} \xto{\widehat{g^{-1}}} \textnormal{subsets of $T$} \xto{\widehat{f^{-1}}} \textnormal{subsets of $S$}
\]
agrees with
\[
 \textnormal{subsets of $U$} \xto{\widehat{(g \circ f)^{-1}}} \textnormal{subsets of $S$}.
\]
\end{lemma}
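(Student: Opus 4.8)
The plan is to unwind the definition of $\widehat{f^{-1}}$ directly and show that each side computes the set of minimal elements of $\{s \in S \mid (g\circ f)(s) \text{ is divisible by the fixed element of }U\}$. Fix $u \in U$ and write $A = \{s \in S \mid u \mid g(f(s))\}$, so that by definition $\widehat{(g\circ f)^{-1}}(u)$ is the set of minimal elements of $A$. On the other side, let $B = \widehat{g^{-1}}(u) = \{\text{minimal } t \in T : u \mid g(t)\}$, and we must compare $\bigcup_{t \in B} \widehat{f^{-1}}(t)$ with the minimal elements of $A$.

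First I would establish the key reduction: if $s \in S$ satisfies $u \mid g(f(s))$, then there is a unique $t \in B$ with $t \mid f(s)$. Existence: among the divisors of $f(s)$ in $T$ (which by Definition \ref{d:gentrun}(3) are totally ordered, one for each divisor of $|f(s)|$), the element $f(s)/d$ with $d = |f(s)|/|u|$ has $|f(s)/d| = |u|$ and $u \mid g(f(s)/d)$ since $u \mid g(f(s))$ forces the divisor of $g(f(s))$ of the same size as $u$ to equal $u$ (using that $g$ is an $R$-map, so $g(f(s)/d) \mid g(f(s))$ with the right size ratio, together with Definition \ref{d:gentrun}(3) in $U$); then some divisor $t$ of $f(s)/d$ lies in $B$. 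Uniqueness and minimality of $t$ relative to $f(s)$ follow from the total-orderedness of the divisor chain below $f(s)$. This shows $A = \bigcup_{t \in B}\{s \in S : t \mid f(s)\}$, and moreover that the union is "disjoint enough": for a given $s \in A$, exactly one $t \in B$ has $t \mid f(s)$.

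Next I would compare minimal elements. If $s$ is minimal in $A$, pick the unique $t \in B$ with $t \mid f(s)$; then $s$ is minimal in $\{s' : t \mid f(s')\}$ (any smaller $s'$ with $t \mid f(s')$ would lie in $A$), so $s \in \widehat{f^{-1}}(t) \subseteq \bigcup_{t\in B}\widehat{f^{-1}}(t)$. Conversely, suppose $s \in \widehat{f^{-1}}(t)$ for some $t \in B$; then $s \in A$, and I claim $s$ is minimal in $A$. If $s' \mid s$ with $s' \in A$, let $t' \in B$ be the unique element with $t' \mid f(s')$; since $f(s') \mid f(s)$ and the divisors of $f(s)$ in $T$ are totally ordered, $t'$ and $t$ are comparable, both lying in the minimal-element set $B = \widehat{g^{-1}}(u)$, hence $t' = t$; then $t \mid f(s')$ with $s' \mid s$ and the minimality of $s$ in $\{s'' : t \mid f(s'')\}$ gives $s' = s$. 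Thus the two sets of minimal elements coincide, and finiteness of $\widehat{(g\circ f)^{-1}}(u)$ also follows (it is contained in the finite union $\bigcup_{t \in B}\widehat{f^{-1}}(t)$ with $B$ finite and each $\widehat{f^{-1}}(t)$ finite).

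The main obstacle I expect is the bookkeeping around condition (3) of Definition \ref{d:gentrun}: one needs to know that for $s \in A$ there really is a well-defined "smallest divisor of $f(s)$ still witnessing divisibility by $u$", and that passing further down to an element of $B$ does not destroy this — essentially that the chain of divisors of $f(s)$ maps compatibly into the chain of divisors of $g(f(s))$, so that "$u \mid g(t)$" behaves monotonically along the chain below $f(s)$. Once this monotonicity is pinned down, everything else is a formal comparison of minimal elements, and the strong-fibration conditions in Definition \ref{d:Nmap} are only needed implicitly (to know $B$ is nonempty when $A$ is, and to know the relevant sets are finite), so I would keep the argument purely order-theoretic and not invoke the ring $k$ at all.
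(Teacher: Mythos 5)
Your overall strategy is the same as the paper's: for each $s$ with $u \mid g(f(s))$ you locate the unique $t \in \widehat{g^{-1}}(u)$ dividing $f(s)$ and then match up minimality on the two sides. The uniqueness argument, the comparison of minimal elements in both directions, and the finiteness remark are all fine.

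There is, however, a concrete error in your existence step. You set $d = |f(s)|/|u|$ and consider $f(s)/d$, but this quotient need not exist: an $R$-map only gives $|t| \mid |g(t)|$ (indeed $|g(t)| = |t|\cdot|g(t_0)|$ where $t_0$ is the element below $t$ with $|t_0|=1$), so from $u \mid g(f(s))$ you may conclude $|u| \mid |g(f(s))|$ but not $|u| \mid |f(s)|$. For instance, with $T=\{1\}$, $U=\langle 2\rangle$ and $g$ the multiplication-by-$2$ map, taking $u=2$ gives $|u|=2$ while $|f(s)|=1$, so your $d$ is not an integer; here the correct $t$ is $f(s)$ itself. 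The fix is either to take $d = \gcd\bigl(|f(s)|,\, |g(f(s))|/|u|\bigr)$ (which is what the paper's proof does, and one checks directly that $u \mid g(f(s)/d)$ and that $f(s)/d$ is minimal), or to argue more softly: the divisors of $f(s)$ form a finite chain by Definition \ref{d:gentrun}(3), $f(s)$ itself satisfies $u \mid g(f(s))$, so the chain contains a minimal element $t$ with $u \mid g(t)$, and minimality among divisors of $f(s)$ already implies minimality in $T$ since every competitor divides $t$ and hence divides $f(s)$. With that repair your argument goes through; it is a more verbose, order-theoretic rendering of the paper's one-line gcd computation rather than a genuinely different route.
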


\begin{proof}
This is a straightforward verification. To show that $\widehat{(g \circ f)^{-1}}(u)$ is contained in $\widehat{f^{-1}}(\widehat{g^{-1}}(u))$, take $s \in \widehat{(g \circ f)^{-1}}(u)$. Then $\gcd(|s|, \frac{|g(f(s))|}{|u|})=1$. Now let $t = \frac{f(s)}{d}$, where $d = \gcd(|f(s)|, \frac{|g(f(s))|}{|u|})$. It follows that $t \in \widehat{g^{-1}}(u)$ and that $s \in \widehat{f^{-1}}(t)$, so $s \in \widehat{f^{-1}}(\widehat{g^{-1}}(u))$. The opposite inclusion is similar.
\end{proof}

We get a category $\TP^N$ of truncation posets and $N$-maps. We note the following consequence of the definition of an $N$-map.

\begin{lemma} \label{l:minimaldiv}
Suppose $f : S \to T$ is an $N$-map. Given $t, t' \in T$ with $t \mid t'$ and $s \in \widehat{f^{-1}}(t)$ there is a unique $s' \in \widehat{f^{-1}}(t')$ with $s \mid s'$. Conversely, given $s' \in \widehat{f^{-1}}(t')$ there is a unique $s \in \widehat{f^{-1}}(t)$ with $s \mid s'$. Moreover, $\frac{|t|}{|s|}$ divides $\frac{|t'|}{|s'|}$.
\end{lemma}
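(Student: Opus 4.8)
The plan is to unwind what membership in $\widehat{f^{-1}}(t)$ means and then reduce everything to an elementary computation with $p$-adic valuations. First I would record the numerology of an $N$-map. If $s$ lies in a connected component of $S$ (in the sense of Lemma \ref{l:splittingofS}) with minimal element $i$, then $|f(s)| = n\,|s|$, where $n := |f(i)|$ depends only on the component, since $f$ is an $R$-map and $i \mid s$. Using this I would show that $s \in \widehat{f^{-1}}(t)$ forces $|s| = |t|/\gcd(|t|,n)$: the relation $t \mid f(s)$ gives $|t| \mid n|s|$, while minimality of $s$ (no proper divisor of $s$ still maps over $t$) translates, via Definition \ref{d:gentrun}(3), into $\gcd(|s|,\, n|s|/|t|) = 1$; a prime-by-prime analysis shows that together these pin down $v_p(|s|) = \max(0, v_p(|t|) - v_p(n))$ for every prime $p$.

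With this established, the engine of the proof is the elementary observation that $|t| \mid |t'|$ implies $|t|/\gcd(|t|,n) \mid |t'|/\gcd(|t'|,n)$, which is clear prime by prime because $r \mapsto \max(0, r - v_p(n))$ is monotone. For the first assertion, given $s \in \widehat{f^{-1}}(t)$, condition (\ref{cond:strongfib}) of Definition \ref{d:Nmap} produces an $s''$ in the component of $s$ with $t' \mid f(s'')$; then $|t'| \mid n|s''|$, so $b := |t'|/\gcd(|t'|,n)$ divides $|s''|$, and since $|s| = |t|/\gcd(|t|,n)$ divides $b$ divides $|s''|$, parts (3) and (4) of Definition \ref{d:gentrun} let me produce the unique $s'$ with $s' \mid s''$ and $|s'| = b$, and this $s'$ satisfies $s \mid s' \mid s''$. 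I then check that $t' \mid f(s')$ (using that $t'$ is the unique element above $t$ of size $|t'|$, via Definition \ref{d:gentrun}(4)) and that $\gcd(|s'|, |f(s')|/|t'|) = \gcd(b, nb/|t'|) = 1$, so $s' \in \widehat{f^{-1}}(t')$. Uniqueness: any other candidate lies above $s$, hence in the same component, hence also has size $b$ by the numerology, hence agrees with $s'$ by Definition \ref{d:gentrun}(4). The converse assertion runs the same way but more cheaply: given $s' \in \widehat{f^{-1}}(t')$ we have $t \mid t' \mid f(s')$, so $a := |t|/\gcd(|t|,n)$ divides $|s'|$, and I take $s$ to be the unique element with $s \mid s'$ and $|s| = a$; that $s \in \widehat{f^{-1}}(t)$ is checked as before, and uniqueness is immediate from Definition \ref{d:gentrun}(3) since any valid $s$ is forced to have size $a$.

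The main obstacle is not a single hard step but the bookkeeping: one must consistently distinguish divisibility of natural numbers ($|s| \mid |t|$) from divisibility in the poset ($s \mid t$), and invoke Definition \ref{d:gentrun}(3)--(4) at precisely the right moments to translate between them. The only genuinely non-formal ingredient is condition (\ref{cond:strongfib}) of the $N$-map definition, which is exactly what guarantees the existence of $s''$, and hence of $s'$, in the first assertion; the converse direction needs no such input because $t \mid t' \mid f(s')$ already exhibits an element of $S$ lying over $t$.
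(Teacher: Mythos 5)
Your proof is correct and follows essentially the same route as the paper's: existence comes from the strong fibration condition (\ref{cond:strongfib}) of Definition \ref{d:Nmap}, and uniqueness comes from pinning down minimal elements within a connected component via their norms using Definition \ref{d:gentrun}(3)--(4). You simply make explicit, via the formula $|s| = |t|/\gcd(|t|,n)$ and the $p$-adic bookkeeping, what the paper compresses into ``they are in the same connected component and satisfy the same minimality condition.''
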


\begin{proof}
Given $t' \in T$ with $t \mid t'$, the definition of an $N$-map gives us some $s'$ in the same connected component as $s$ with $t' \mid f(s')$. If we require $s'$ to be minimal then $s'$ is unique.

But then $t \mid f(s')$ and we get an element $s'' \in \widehat{f^{-1}}(t)$ defined by $s'' = \frac{s'}{d}$ with $d = \gcd(|s'|, \frac{|f(s')|}{|t|})$. It follows that $s=s''$ since they are in the same connected component and satisfy the same minimality condition. Hence $s=s'' \mid s'$.

For the converse, note that $s' \in \widehat{f^{-1}}(t')$ also satisfies $t \mid f(s')$. We can then define $s$ by requiring that $s \mid s'$ and $t \mid f(s)$, and that $s$ is minimal.
\end{proof}

To get a better understanding of $\TP^N$, we first consider ordinary truncation sets.

\begin{lemma} \label{l:Nmapclassical}
Suppose $U$ and $V$ are ordinary truncation sets. Then $g : U \to V$ is an $N$-map if and only if $V=\langle n \rangle U$ and $g$ is multiplication by $n$ as in Example \ref{ex:multn2}.
\end{lemma}

\begin{proof}
Let $n=g(1)$. Then $g$ factors as $U \xto{n} \langle n \rangle U \xto{g'} V$. Now $g'$ is automatically injective, and condition (\ref{cond:strongfib}) in Definition \ref{d:Nmap} implies that $g'$ is also surjective.
\end{proof}

For an $N$-map of truncation posets we then get the following:

\begin{lemma} \label{l:decomposeNmap}
Suppose $f : S \to T$ is an $N$-map. Decompose $S$ and $T$ as $S = \coprod_i S_i$ and $T=\coprod_j T_j$ with each $S_i$ and $T_j$ isomorphic to an ordinary truncation set as in Lemma \ref{l:splittingofS}. Then $f$ is the coproduct of maps
\[
 S_i \xto{f_i} T_j \subset T,
\]
and each $f_i$ is isomorphic to a map of the form $U_i \xto{n_i} \langle n_i \rangle U_i$ as in Example \ref{ex:multn2}.

Moreover, only finitely many $S_i$ map to each $T_j$.
\end{lemma}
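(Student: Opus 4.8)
The plan is to follow the proof of Lemma \ref{l:decomposeTmap}, invoking the stronger hypothesis only at the end. First, exactly as for a $T$-map, any $R$-map preserves the partial order and hence carries each connected component into a single connected component: if $i \in S_i$ is the minimal element (so $|i| = 1$) and $s \in S_i$, then $f(i) \mid f(s)$, and if $j \in T$ is the unique element with $j \mid f(i)$ and $|j| = 1$, then $j \mid f(s)$, so $f(s)$ lies in the component $T_j$ of $j$. Thus $f$ is the coproduct of maps $f_i \colon S_i \to T_j \subset T$.

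Next I would transport $f_i$ across the isomorphisms $S_i \cong U$ and $T_j \cong V$ furnished by Lemma \ref{l:splittingofS} to obtain a map $g \colon U \to V$ of ordinary truncation sets. Because $g$ is a map of truncation posets and $1 \in U$, we have $|g(u)|/|g(1)| = |u|$, so $g$ is multiplication by $n := g(1)$; in particular $g$ is injective with image $nU$. Nothing beyond the $R$-map axioms is used so far; this is just the content of Lemma \ref{l:decomposeTmap} up to the identification of the target.

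The essential point is to show $V = \langle n \rangle U$. One inclusion is automatic: $V$ is a truncation set containing $nU$, so any $es$ with $e \mid n$ and $s \in U$ divides $ns \in V$ and therefore lies in $V$, giving $\langle n \rangle U \subseteq V$. For the reverse inclusion I would unwind condition (\ref{cond:strongfib}) of Definition \ref{d:Nmap} for $f_i$: since the source and target of $f_i$ are connected, it says precisely that every $v \in V$ satisfies $v \mid n u'$ for some $u' \in U$. A comparison of $p$-adic valuations then lets one factor $v = e s$ with $e \mid n$ and $s \mid u'$ (take the exponent of each prime $p$ in $e$ to be the minimum of its exponents in $v$ and in $n$); since $U$ is closed under division, $s \in U$, so $v \in \langle n \rangle U$. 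Hence $V = \langle n \rangle U$ and $f_i$ is isomorphic to $U \xto{n} \langle n \rangle U$.

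Finally, for the last sentence of the lemma: if $S_i$ maps into $T_j$ and $j_0$ is the minimal element of $T_j$, then $j_0 \mid f(i)$, and $i$ is minimal in $\{s \in S : j_0 \mid f(s)\}$, because $|i| = 1$ forces $i$ to be the only element dividing $i$. So the collection of such $i$ is a subset of the finite set $\widehat{f^{-1}}(j_0)$, which proves the finiteness claim. The only step that requires genuine care is the valuation bookkeeping in the identification $V = \langle n \rangle U$; everything else runs parallel to Lemma \ref{l:decomposeTmap}.
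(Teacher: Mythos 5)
Your proof is correct and follows essentially the same route as the paper: transport $f_i$ to a map $g\colon U\to V$ of ordinary truncation sets, identify $g$ with multiplication by $n=g(1)$, and use condition (\ref{cond:strongfib}) together with the divisibility/valuation argument to show the factorization $U\xto{n}\langle n\rangle U\hookrightarrow V$ is onto. You additionally supply two details the paper's proof leaves implicit or omits: the valuation bookkeeping showing $V\subseteq\langle n\rangle U$, and the proof of the final finiteness claim via the inclusion of the minimal elements of the relevant $S_i$ into $\widehat{f^{-1}}(j_0)$ — both are correct.
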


\begin{example}
Fix positive integers $a_1,\ldots,a_n$ and $b_1,\ldots,b_n$ with each $a_i \mid b_i$. Also fix $N \in \bN \cup \{\infty\}$. Let $S \subset \bN^n$ be the following truncation poset:
\[
 S = \{(s_1,\ldots,s_n) \in \bN^n \quad | \quad a_i \mid s_i \textnormal{ and } s_1+\ldots+s_n \leq N \}
\]
with
\[
 |(s_1,\ldots,s_n)| = \gcd \Big( \frac{s_1}{a_1},\ldots,\frac{s_n}{a_n} \Big)
\]
as in Example \ref{ex:N_to_the_n2}.

Similarly, let $T \subset \bN^n$ be the truncation poset
\[
 T = \{(t_1,\ldots,t_n) \in \bN^n \quad | \quad b_i \mid t_i \textnormal{ and } t_1+\ldots+t_n \leq N \}
\]
with
\[
 |(t_1,\ldots,t_n)| = \gcd \Big( \frac{t_1}{b_1},\ldots,\frac{t_n}{b_n} \Big).
\]
Then the inclusion $T \subset S$ is a $T$-map but not generally an $N$-map.
\end{example}

\begin{example} \label{ex:wordmap}
Fix positive integers $a$ and $b$ with $a \mid b$. Also fix $N \in \bN \cup \{\infty\}$. Let $S$ be the following truncation poset:
\[
 S = \{ \textnormal{words in $x_1,\ldots,x_n$ of length $\leq N$ and word length divisible by $a$} \}/\sim_a
\]
where $\sim_a$ is the equivalence relation given by cyclically permuting blocks of $a$ letters as in Example \ref{ex:words2}.

Similarly, let $T$ be the truncation poset
\[
 T = \{ \textnormal{words in $x_1,\ldots,x_n$ of length $\leq N$ and word length divisible by $b$} \}/\sim_b.
\]
Then there is a natural map $T \to S$ sending the equivalence class $[w]_{\sim_b}$ of a word to $[w]_{\sim_a}$, and this map is a $T$-map but not generally an $N$-map. 
\end{example}

\begin{remark}
Note that we never require $|f(s)|=|s|$, and that in the interesting examples $|s|$ is not defined in the most obvious way.
\end{remark}

\section{Witt vectors as functors from $\TP$} \label{s:genWitt}
Before we say anything about how to combine the categories $\TP^T$, $\TP^N$ and $(\TP^R)^\op$ we describe the Witt vectors as a functor from each individual category. We will use the following result, which Hesselholt \cite{He15} attributes to Dwork in the case of an ordinary truncation set.

\begin{lemma} \label{l:Dwork}
Let $S$ be a truncation poset and let $k$ be a commutative ring. Suppose that for every prime $p$, there exists a ring homomorphism $\phi_p : k \to k$ such that $\phi_p(a) \equiv a^p \mod p$. Then $\langle x_s \rangle$ is in the image of the ghost map if and only if $x_s \equiv \phi_p(x_{s/p}) \mod p^{\nu_p(|s|)}$ for every $p$ and every $s \in S$ with $p \mid |s|$.
\end{lemma}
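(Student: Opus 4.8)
The plan is to reduce to the classical case via the splitting of Lemma \ref{l:splittingofS}, and then within a single connected component to run the standard Dwork-lemma argument componentwise in the $\bN$-grading. First I would observe that by Lemma \ref{l:splittingofS} we may write $S = \coprod_{i \in I} S_i$ with each $S_i$ isomorphic, via $|-|$, to an ordinary truncation set $U_i \subset \bN$. Both the ghost map and the congruence conditions $x_s \equiv \phi_p(x_{s/p}) \bmod p^{\nu_p(|s|)}$ are defined component by component: the formula $x_s = \sum_{t \mid s} |t| a_t^{s/t}$ only involves $t$ with $t \mid s$, which all lie in the same connected component as $s$, and $s/p$ lies in that component as well. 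So $\langle x_s \rangle$ is in the image of the ghost map for $S$ if and only if its restriction to each $S_i$ is in the image of the ghost map for $S_i$, and likewise for the congruences. Transporting along the isomorphism $S_i \cong U_i$ (which preserves $|-|$, hence preserves both the ghost formula and the exponents $\nu_p(|s|)$), we are reduced to proving the statement for an ordinary truncation set, which is exactly the classical Dwork lemma as stated by Hesselholt.

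If a self-contained argument is wanted rather than a citation, I would prove the classical case as follows. For the ``only if'' direction: if $x_s = w(a)_s = \sum_{t \mid s} t\, a_t^{s/t}$, fix a prime $p$ with $p \mid s$ and compute modulo $p^{\nu_p(s)}$. Split the sum over $t \mid s$ according to whether $p \mid (s/t)$ or not. Using the hypothesis $\phi_p(a) \equiv a^p \bmod p$ together with the elementary fact that $a \equiv b \bmod p^j$ implies $a^p \equiv b^p \bmod p^{j+1}$, one checks term by term that $t\, a_t^{s/t} \equiv \phi_p\big(t'\, a_{t'}^{(s/p)/t'}\big) \bmod p^{\nu_p(s)}$ after reindexing $t' = t$ or $t' = t/p$ appropriately, with the terms indexed by $t \nmid (s/p)$ contributing $0$ modulo $p^{\nu_p(s)}$ because they carry a factor $t$ with $\nu_p(t) = \nu_p(s)$. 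Summing gives $x_s \equiv \phi_p(x_{s/p}) \bmod p^{\nu_p(s)}$. For the ``if'' direction: given $\langle x_s \rangle$ satisfying all the congruences, construct $(a_s)$ by induction on $|s|$, solving the equation $x_s = \sum_{t \mid s}|t| a_t^{s/t}$ for $a_s$ in terms of the $a_t$ with $t \mid s$, $t \neq s$ already constructed. The obstruction to solvability is exactly that $x_s - \sum_{t \mid s, t \neq s} |t| a_t^{s/t}$ be divisible by $|s|$ (the coefficient of $a_s$ is $|s| = s$); factoring $|s| = \prod p^{\nu_p(s)}$, one checks the divisibility one prime at a time using the congruence hypothesis for that $p$ applied to $x_s$ and $x_{s/p}$, together with the inductively known identity $x_{s/p} = \sum_{t \mid s/p}|t|a_t^{(s/p)/t}$ and the congruence $a_t^{s/t} \equiv \phi_p(a_t^{(s/p)/t}) \bmod p$ lifted to higher powers.

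The main obstacle is the bookkeeping in the ``if'' direction: correctly matching up, modulo the right power of $p$, the terms of $x_s - \sum_{t\neq s}|t|a_t^{s/t}$ with $\phi_p$ applied to the terms of $x_{s/p}$, and verifying that the $p$-adic valuations line up so that the difference is divisible by $p^{\nu_p(s)}$ exactly. This is where one must be careful about which $t \mid s$ also divide $s/p$ and which do not, and about the valuation of the leading coefficient $|s|$. Everything else—the reduction via connected components, the functoriality in $k$, and the ``only if'' direction—is routine, so I would either cite Hesselholt for the classical case and spend the bulk of the writeup on the reduction, or reproduce the classical argument with the term-matching spelled out carefully.
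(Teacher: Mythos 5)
Your proposal is correct and matches the paper's intent exactly: the paper simply states that the proof is identical to the classical case and omits it, and your reduction via the connected-component splitting of Lemma \ref{l:splittingofS} (noting that both the ghost formula and the congruences are computed within a single component, transported along $|-|$) is precisely the justification for that omission. The sketch of the classical Dwork argument is also the standard one and is sound.
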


This is a direct consequence of the ordinary Dwork Lemma and the fact that Witt vectors for truncation posets decompose as products of classical Witt vectors.

\subsection{Restriction and Frobenius maps}
The maps in $\TP^R$ will do triple duty, encoding restriction and Frobenius maps as well as diagonal maps. We start with the following definition, which we justify below.

\begin{definition} \label{d:RFdiagmap}
Let $f : S \to T$ be a map of truncation posets. Then
\[
 f^* : \bW_T(k) \to \bW_S(k)
\]
is defined to be the unique map of sets, natural in $k$, such that the diagram
\[ \xymatrix{
 \bW_T(k) \ar[r]^-w \ar[d]_{f^*} & k^T \ar[d]^{f^*_w} \\
 \bW_S(k) \ar[r]^-w & k^S
} \]
commutes. Here $f^*_w$ is defined by $(f^*_w \langle x_t \rangle)_s = x_{f(s)}$.
\end{definition}

It is clear that if this defines a map on Witt vectors then $(g \circ f)^* = f^* \circ g^*$ because this holds on ghost coordinates, and that we get a functor $\bW(k) : (\TP^R)^\op \to Set$.

\begin{lemma} \label{l:Rmapwelldef}
Given an $R$-map $f : S \to T$, the composite $f^*_w \circ w$ is contained in the image of the ghost map.
\end{lemma}

\begin{proof}
We use Dwork's Lemma, compare \cite[Lemma 1.4]{He15}. First assume that $A=\bZ[a_t]_{t \in T}$ and $\phi_p(a_t)=a_t^p$, and that $(a_t)$ is the ``canonical Witt vector'' in $\bW_T(A)$. It then suffices to check that $(f^*_w \circ w)(a_t)$ is in the image of $w$. Let $\langle x_t \rangle = w(a_t) \in A^T$ and $\langle y_s \rangle = f^*_w \langle x_t \rangle \in A^S$. This means we have to verify that
\[
 y_s \equiv \phi_p(y_{s/p}) \mod p^{\nu_p(|s|)}
\]
whenever $p \mid |s|$. We have $y_s=x_{f(s)}$ and $y_{s/p}=x_{f(s/p)}=x_{f(s)/p}$, so because $\langle x_t \rangle$ is in the image of the ghost map we can conclude that $y_s \equiv \phi_p(y_{s/p}) \mod p^{\nu_p(|f(s)|)}$. But then the result follows, because $|s| \mid |f(s)|$ and so $\nu_p(|f(s)|) \geq \nu_p(|s|)$.

Now the result follows for any $k$ and any element of $\bW_T(k)$ by naturality. Given $(a_t') \in \bW_T(k)$, let $g : A \to k$ be the ring homomorphism that sends $a_t$ to $a_t'$. Then the only way to define $f^*(a_t')$ in a way that is natural in $k$ is as $\bW_S(g)(f^*(a_s))$, and because the ghost map is natural in the ring this does indeed make the diagram in Definition \ref{d:RFdiagmap} commute.
\end{proof}

It follows that Definition \ref{d:RFdiagmap} does indeed define a map $f^* : \bW_T(k) \to \bW_S(k)$. It is unique because it is unique on the ``universal Witt vector'' $(a_t)$ in the ``representing ring'' $k=\bZ[a_t]_{t \in T}$. Next we discuss how $f^*$ generalizes the diagonal map, the classical restriction map, and the classical Frobenius map.

\begin{lemma}
Let $S$ be any truncation poset and let $\nabla : S \coprod S \to S$ be the fold map. Then
\[
 \nabla^* : \bW_S(k) \to \bW_{S \coprod S}(k) \cong \bW_S(k) \times \bW_S(k)
\]
is the diagonal map.
\end{lemma}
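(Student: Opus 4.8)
The plan is to unwind Definition \ref{d:RFdiagmap} in the case $f = \nabla$, recognize the resulting map on ghost coordinates as the diagonal, and then invoke the uniqueness clause already built into that definition. First I would fix notation: write the two copies of $s \in S$ inside $S \coprod S$ as $s^{(1)}$ and $s^{(2)}$, so that $|s^{(i)}| = |s|$, the divisibility relations within each copy match those of $S$, and the fold map satisfies $\nabla(s^{(i)}) = s$ for $i = 1,2$. The canonical isomorphisms $\bW_{S \coprod S}(k) \cong \bW_S(k) \times \bW_S(k)$ and $k^{S \coprod S} \cong k^S \times k^S$ both send a vector to the pair of its restrictions to the two copies of $S$, and (by the lemma preceding Lemma \ref{l:splittingofS}) they are compatible under the ghost map.

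Next I would compute $\nabla^*$ on ghost coordinates. By Definition \ref{d:RFdiagmap}, $\nabla^*_w$ sends $\langle x_s \rangle \in k^S$ to the vector in $k^{S \coprod S}$ whose $s^{(i)}$-entry is $x_{\nabla(s^{(i)})} = x_s$; under the identification $k^{S \coprod S} \cong k^S \times k^S$ this is exactly $\langle x_s \rangle \mapsto (\langle x_s \rangle, \langle x_s \rangle)$, i.e.\ the diagonal of $k^S$. Hence the defining commutative square for $\nabla^*$ exhibits it as a map $\bW_S(k) \to \bW_S(k) \times \bW_S(k)$, functorial in $k$, whose composite with $w \times w$ equals the diagonal of $k^S$ precomposed with $w$.

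Finally, the genuine diagonal map $\Delta \colon \bW_S(k) \to \bW_S(k) \times \bW_S(k)$ visibly has this same property and is natural in $k$, so $\nabla^*$ and $\Delta$ are two maps satisfying the characterizing property of Definition \ref{d:RFdiagmap}; therefore they coincide. The only step I would single out as needing a word of care — though it is hardly a real obstacle — is this uniqueness appeal: one reduces to checking the identity on the universal Witt vector $(a_s)$ over the representing ring $\bZ[a_s]_{s \in S}$, and since that ring is torsion-free the ghost map $w$ is injective there, so agreement after postcomposition with $w$ forces agreement. Apart from this, the argument is purely a matter of matching definitions.
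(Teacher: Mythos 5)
Your proposal is correct and is exactly the spelled-out version of what the paper dismisses with ``immediate from the definition'': compute $\nabla^*_w$ on ghost coordinates, identify it with the diagonal of $k^S$, and invoke the uniqueness clause of Definition \ref{d:RFdiagmap} (via the universal Witt vector over the torsion-free ring $\bZ[a_s]$). No gaps; the extra care about the uniqueness appeal matches the paper's own justification following Lemma \ref{l:Rmapwelldef}.
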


\begin{proof}
This is immediate from the definition.
\end{proof}

\begin{lemma}
Suppose $S \subset T$ is an inclusion of ordinary truncation sets and let $i : S \to T$ denote the inclusion. Then
\[
 i^* : \bW_T(k) \to \bW_S(k)
\]
is the classical restriction map $R^T_S$.
\end{lemma}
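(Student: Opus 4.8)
The plan is to check that the classical restriction map $R^T_S$ satisfies the defining property of $i^*$ in Definition \ref{d:RFdiagmap}, and then invoke the uniqueness clause there. Since the inclusion $i : S \to T$ literally sends $s$ to $s$, and since for an ordinary truncation set $|-|$ is the identity map, we have $i(s) = s$ and $|s|_S = |s|_T = s$ for every $s \in S$. Moreover, because $S$ is closed under division, for $s \in S$ the set $\{t \in T \mid t \mid s\}$ equals $\{t \in S \mid t \mid s\}$. Hence the ghost formula for $\bW_S(k)$, namely $x_s = \sum_{t \mid s} |t| a_t^{s/t}$, is exactly the restriction to indices $s \in S$ of the ghost formula for $\bW_T(k)$.

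Next I would recall that the classical restriction map is given on ghost coordinates by $(R^T_S \langle x_t \rangle)_s = x_s$, which in the notation of Definition \ref{d:RFdiagmap} is precisely $(i^*_w \langle x_t \rangle)_s = x_{i(s)} = x_s$. Combined with the observation above about ghost formulas, this shows that the square
\[ \xymatrix{
 \bW_T(k) \ar[r]^-w \ar[d]_{R^T_S} & k^T \ar[d]^{i^*_w} \\
 \bW_S(k) \ar[r]^-w & k^S
} \]
commutes, functorially in $k$: indeed the $k$-valued points of $R^T_S$ forget the coordinates $a_t$ for $t \in T \setminus S$, and applying $w$ on either side and then comparing coordinate $s \in S$ gives $x_s$ in both cases.

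Finally I would appeal to the uniqueness statement following Lemma \ref{l:Rmapwelldef}: the map $f^*$ making that square commute is unique because it is determined by its effect on the universal Witt vector $(a_t) \in \bW_T(\bZ[a_t]_{t \in T})$, and the ghost map is injective over the torsion-free ring $\bZ[a_t]_{t \in T}$. Since $R^T_S$ also makes the square commute, we conclude $i^* = R^T_S$. There is essentially no obstacle here; the only thing to be careful about is confirming that the $S$-ghost formula is the honest restriction of the $T$-ghost formula, which as noted uses only that $|-|$ is the identity on ordinary truncation sets and that $S$ is division-closed in $T$.
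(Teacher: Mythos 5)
Your proposal is correct and follows exactly the route the paper intends: the paper's proof is simply ``immediate from the definition, using that the restriction map can be defined on either Witt coordinates or ghost coordinates,'' and your argument is the careful expansion of that remark (commutativity of the square via the ghost-coordinate description of $R^T_S$, plus uniqueness via the universal Witt vector over $\bZ[a_t]_{t\in T}$). Your observation that division-closedness of $S$ makes the $S$-ghost formula the honest restriction of the $T$-ghost formula is precisely the point being used implicitly.
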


\begin{proof}
This is immediate from the definition, using that the restriction map can be defined on either Witt coordinates or ghost coordinates.
\end{proof}

\begin{lemma}
Let $n \in \bN$ and let $S$ be an ordinary truncation set. Then
\[
 f^* : \bW_S(k) \to \bW_{S/n}(k)
\]
induced by the multiplication by $n$ map $f : S/n \to S$ is the classical Frobenius map $F_n$.
\end{lemma}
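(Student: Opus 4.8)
The plan is to verify the claim directly on ghost coordinates, using that the classical Frobenius map $F_n$ is characterized by a formula there, and that $f^*$ is characterized by Definition \ref{d:RFdiagmap}. The key point is simply to unwind what the map $f : S/n \to S$, multiplication by $n$, does. An element $t \in S/n$ is a natural number with $nt \in S$, and under the identification of Example \ref{ex:multn2} (or Example \ref{ex:nN}), $f(t) = nt$ as an element of $S$. Note that $|t|_{S/n} = t$ and $|f(t)|_S = |nt|_S$; I should check this is consistent with the requirement $\frac{|f(t_2)|}{|f(t_1)|} = \frac{|t_2|}{|t_1|}$ for the map to make sense, but this was already asserted in Example \ref{ex:multn2}, so I may assume it.

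First I would recall that by Definition \ref{d:RFdiagmap}, $f^*$ is the unique functorial map making the square with $f^*_w$ commute, where $(f^*_w\langle x_s\rangle)_t = x_{f(t)} = x_{nt}$. Then I would recall that the classical Frobenius $F_n : \bW_S(k) \to \bW_{S/n}(k)$ is defined on ghost coordinates by $\langle x_s \rangle \mapsto \langle y_t \rangle$ with $y_t = x_{nt}$. So the map induced by $F_n$ on ghost coordinates is exactly $f^*_w$. Since both $f^*$ and $F_n$ are functorial in $k$ and become $f^*_w$ after applying the (injective on the universal case) ghost map $w$, and since Lemma \ref{l:Rmapwelldef} guarantees the relevant composite lands in the image of the ghost map so that $f^*$ is well-defined, the uniqueness clause in Definition \ref{d:RFdiagmap} forces $f^* = F_n$.

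The only genuine content, and the step I expect to require the most care, is confirming that the ghost map for $\bW_{S/n}(k)$ in the sense of the general definition (with the weight function $|t|_{S/n} = t$) actually reproduces the classical ghost map on $\bW_{S/n}(k)$ as an ordinary truncation set — i.e., that $S/n$ with $|-|$ equal to the identity is the right truncation poset structure here, so that the classical $F_n$ really is comparing like with like. This is where one uses that $S/n \subset \bN$ is an ordinary truncation set and invokes the Example asserting its weight function is the identity, together with condition (2) of Definition \ref{d:gentrun} to see the ratios match up. Everything else is bookkeeping: the two definitions of $F_n$ (Witt vs.\ ghost coordinates) agree by the standard classical fact already quoted in the paper, so it suffices to match on ghost coordinates, which is immediate.

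\begin{proof}
Identify $S/n$ with the truncation poset $nS \cap S$ via multiplication by $n$ as in Example \ref{ex:multn}, so that the map $f$ becomes the inclusion $nS \cap S \subset S$, sending $t$ (thought of as an element of $S/n$, with $|t|_{S/n}=t$) to $nt \in S$. By Definition \ref{d:RFdiagmap}, $f^*$ is the unique functorial map with $w \circ f^* = f^*_w \circ w$, where $(f^*_w\langle x_s\rangle)_t = x_{f(t)} = x_{nt}$. On the other hand, the classical Frobenius $F_n : \bW_S(k) \to \bW_{S/n}(k)$ is by definition the map which on ghost coordinates sends $\langle x_s\rangle$ to $\langle y_t\rangle$ with $y_t = x_{nt}$; that is, $w \circ F_n = f^*_w \circ w$ as well. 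Both $f^*$ and $F_n$ are functorial in $k$, and by Lemma \ref{l:Rmapwelldef} the composite $f^*_w \circ w$ lands in the image of the ghost map, so such a map exists and is unique (as it is determined on the universal Witt vector over $k = \bZ[a_s]_{s\in S}$, where $w$ is injective). Hence $f^* = F_n$.
\end{proof}
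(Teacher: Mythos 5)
Your proof is correct and takes the same route as the paper, which simply observes that Definition \ref{d:RFdiagmap} reduces in this case to the usual ghost-coordinate definition of the Frobenius; you have just spelled out the uniqueness/well-definedness bookkeeping that the paper leaves implicit.
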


\begin{proof}
In this case Definition \ref{d:RFdiagmap} reduces to the usual definition of the Frobenius.
\end{proof}

Given any $R$-map $f : S \to T$, the induced map $f^* : \bW_T(k) \to \bW_S(k)$ factors as a composite of an iterated diagonal map, a classical Frobenius map on each connected component, and a classical restriction map on each connected component. Because each of these maps is well defined on Witt coordinates by the classical theory of Witt vectors, it is possible to prove Lemma \ref{l:Rmapwelldef} by piecing together these classical results.

\subsection{Addition and Verschiebung maps}
The maps in $\TP^T$ will encode addition and Verschiebung maps. Again we make the definition first and justify it later.

\begin{definition} \label{d:Vplusmap}
Let $f : S \to T$ be a $T$-map of truncation posets. Then
\[
 f_\oplus : \bW_S(k) \to \bW_T(k)
\]
is defined to be the unique map of sets, natural in $k$, such that the diagram
\[ \xymatrix{
 \bW_S(k) \ar[r]^-w \ar[d]_{f_\oplus} & k^S \ar[d]^{f^w_\oplus} \\
 \bW_T(k) \ar[r]^-w & k^T
} \]
commutes. Here $f^w_\oplus$ is the map defined by
\[
 (f^w_\oplus \langle x_s \rangle)_t = \sum_{s \in f^{-1}(t)} \frac{|t|}{|s|} x_s.
\]
\end{definition}

Note that we needed the finiteness condition in Definition \ref{d:Tmap} to define $f_\oplus^w$.

\begin{lemma} \label{l:Vmapwelldef}
Given a $T$-map $f : S \to T$, the composite $f_\oplus^w \circ w$ is contained in the image of the ghost map.
\end{lemma}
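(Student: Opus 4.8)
The plan is to verify the hypothesis of Dwork's Lemma (Lemma~\ref{l:Dwork}) directly. Fix $(a_s)\in\bW_S(k)$, write $\langle x_s\rangle = w(a_s)$ and $\langle y_t\rangle = f_\oplus^w\langle x_s\rangle$, so that $y_t = \sum_{s\in f^{-1}(t)}\frac{|t|}{|s|}x_s$. Having chosen ring homomorphisms $\phi_p$ with $\phi_p(a)\equiv a^p \bmod p$, it suffices to show that $y_t \equiv \phi_p(y_{t/p}) \bmod p^{\nu_p(|t|)}$ for every prime $p$ and every $t\in T$ with $p\mid |t|$. Throughout we may use that $\langle x_s\rangle$, being in the image of the ghost map, already satisfies $x_s\equiv\phi_p(x_{s/p})\bmod p^{\nu_p(|s|)}$ whenever $p\mid|s|$.

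The combinatorial heart is a comparison of the fibers $f^{-1}(t)$ and $f^{-1}(t/p)$. First, any $s\in f^{-1}(t)$ satisfies $|s|\mid |t|$: apply the defining relation of an $R$-map to the unique $s_0\mid s$ with $|s_0|=1$ to get $|f(s_0)|\cdot|s| = |t|$. Next, I would build a map $\sigma\colon f^{-1}(t/p)\to f^{-1}(t)$: given $s'$ with $f(s')=t/p$, condition~(\ref{cond:fib}) of Definition~\ref{d:Tmap} applied to $f(s')\mid t$ produces some $s''$ with $s'\mid s''$ and $f(s'')=t$, forcing $|s''|=p|s'|$, and condition~(4) of Definition~\ref{d:gentrun} makes $s''$ unique, so we may set $\sigma(s'):=s''$. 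One checks that $\sigma$ is injective with $\sigma(s')/p = s'$, and that its image is exactly $\{s\in f^{-1}(t) : p\mid |s|\}$ (surjectivity onto this set uses condition~(3) of Definition~\ref{d:gentrun}). Thus $f^{-1}(t)$ splits into the $p$-divisible part, carried bijectively onto $f^{-1}(t/p)$ by $s\mapsto s/p$, together with the part on which $p\nmid|s|$.

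Feeding this splitting into the formulas for $y_t$ and $\phi_p(y_{t/p})$ gives
\[
 y_t - \phi_p(y_{t/p}) = \sum_{s'\in f^{-1}(t/p)}\frac{|t|}{p|s'|}\bigl(x_{\sigma(s')}-\phi_p(x_{s'})\bigr) + \sum_{\substack{s\in f^{-1}(t)\\ p\nmid |s|}}\frac{|t|}{|s|}x_s,
\]
and it remains to see that every term is divisible by $p^{\nu_p(|t|)}$. For the first sum, $x_{\sigma(s')}-\phi_p(x_{s'})\equiv 0 \bmod p^{\nu_p(|\sigma(s')|)}$ since $\langle x_s\rangle$ is in the image of the ghost map and $\sigma(s')/p = s'$; because $\nu_p(|\sigma(s')|)=\nu_p(|s'|)+1$, this supplies exactly the extra power of $p$ needed to absorb the denominator of $\frac{|t|}{p|s'|}$. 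For the second sum, $p\nmid|s|$ forces $\nu_p\bigl(\frac{|t|}{|s|}\bigr)=\nu_p(|t|)$ immediately. An application of Dwork's Lemma then finishes the proof.

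I expect the only real obstacle to be the fiber bookkeeping of the second paragraph: one must check that $\sigma$ is genuinely well defined (existence from the fibration condition, uniqueness from condition~(4) of Definition~\ref{d:gentrun}), that it lands surjectively on the $p$-divisible part of $f^{-1}(t)$, and that the valuation identity $\nu_p(|\sigma(s')|)=\nu_p(|s'|)+1$ is precisely what makes the estimate go through; everything else is routine reindexing. As a sanity check, Lemma~\ref{l:decomposeTmap} identifies $f$ componentwise with a disjoint union of maps of the form $V/n\xrightarrow{n}V$, on each of which $f_\oplus$ should be the classical Verschiebung $V_n$, which is manifestly well defined on Witt coordinates; since the image of the ghost map is closed under the resulting finite sums over components, this gives a second, less computational route to the lemma.
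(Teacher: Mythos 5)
Your argument is correct, and it is genuinely a different route from the paper's. The paper's proof drops all the way down to Witt coordinates: working in the universal ring $k=\bZ[a_s]$ it expands $x_s=\sum_{u\mid s}|u|a_u^{s/u}$, matches each monomial of $\phi_p(y_{t/p})$ labelled by $(s',v)$ with the monomial of $y_t$ labelled by $(ps',v)$, and checks that the unmatched monomials of $y_t$ (those with $\nu_p(|u|)=\nu_p(|s|)$) have coefficient $\frac{|t|}{|s|}|u|$ divisible by $p^{\nu_p(|t|)}$. You instead never open up $x_s$: you use the forward direction of Dwork's Lemma on the input $\langle x_s\rangle$ to harvest the congruences $x_{\sigma(s')}\equiv\phi_p(x_{s'})\bmod p^{\nu_p(|s'|)+1}$ and push them through the linear formula for $f_\oplus^w$, so your argument really shows that $f_\oplus^w$ preserves the Dwork condition for \emph{any} vector satisfying it. Both proofs hinge on the same combinatorial fact --- the fibration condition plus conditions (3) and (4) of Definition~\ref{d:gentrun} give a bijection between $f^{-1}(t/p)$ and $\{s\in f^{-1}(t): p\mid|s|\}$ --- which you spell out carefully and the paper compresses into one sentence; your valuation bookkeeping ($\nu_p(|\sigma(s')|)=\nu_p(|s'|)+1$ absorbing the $p$ in the denominator, and $\nu_p(|t|/|s|)=\nu_p(|t|)$ when $p\nmid|s|$) is exactly right. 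The one thing you should make explicit is the reduction to the universal case $k=\bZ[a_s]_{s\in S}$ with $\phi_p(a_s)=a_s^p$: for a general commutative ring a Frobenius lift $\phi_p$ need not exist, so Dwork's Lemma is applied to the canonical Witt vector over the polynomial ring and the general statement follows by functoriality in $k$ (this is why the paper opens its proof by fixing that choice of $k$). Your closing observation --- reducing to classical Verschiebung and addition via Lemma~\ref{l:decomposeTmap} --- is also the alternative the paper itself flags after the proof; if you pursue it, note only that infinitely many components $S_i$ may map to a given $T_j$, so one must observe that each ghost coordinate still receives only finitely many nonzero contributions.
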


\begin{proof}
First let $A=\bZ[a_s]_{s \in S}$ and let $(a_s) \in \bW_S(A)$ be the ``canonical Witt vector''. As in the proof of Lemma \ref{l:Rmapwelldef}, let $\langle x_s \rangle = w(a_s)$ and $\langle y_t \rangle = f_\oplus^w \langle x_s \rangle$. Also, let $\phi_p : A \to A$ be the ring map defined by mapping $a_s$ to $a_s^p$.

Suppose $\nu_p(|t|) \geq 1$. We need to verify that $y_t \equiv \phi_p(y_{t/p}) \mod p^{\nu_p(|t|)}$. We have
\[
 y_t = \sum_{f(s)=t} \frac{|t|}{|s|} x_s = \sum_{f(s)=t} \frac{|t|}{|s|} \sum_{u \mid s} |u| a_u^{s/u}
\]
and
\[
 y_{t/p} = \sum_{f(s')=t/p} \frac{|t|/p}{|s'|} x_{s'} = \sum_{f(s')=t/p} \frac{|t|/p}{|s'|} \sum_{v \mid s'} |v| a_v^{s'/v}.
\]
Hence
\[
 \phi_p(y_{t/p}) = \sum_{f(s')=t/p} \frac{|t|/p}{|s'|} \sum_{v \mid s'} |v| a_v^{ps'/v}.
\]

Each term in $\phi_p(y_{t/p})$ labelled by $s'$ and $v$ corresponds to a term in $y_t$ labelled by $s=ps'$ and $u=v$. Note that here we used the fibration condition in Definition \ref{d:Tmap}. The terms of $y_t$ that do not correspond to a term of $\phi_p(y_{t/p})$ all correspond to terms labelled by $(s,u)$ with $\nu_p(|u|) = \nu_p(s)$. But in those cases the coefficient $\frac{|t|}{|s|} \cdot |u|$ of $a_u^{s/u}$ is divisible by $p^{\nu_p(|t|)}$, so the result follows.

Now the result follows for any $k$ and any element of $\bW_S(k)$ by naturality, using the same argument as in the proof of Lemma \ref{l:Rmapwelldef}.
\end{proof}

It follows that Definition \ref{d:Vplusmap} does indeed define a unique map $f_\oplus : \bW_S(k) \to \bW_T(k)$.

\begin{lemma}
Let $S$ be an ordinary truncation set and let $\nabla : S \coprod S \to S$ be the fold map. Then
\[
 \bW_S(k) \times \bW_S(k) \cong \bW_{S \coprod S}(k) \xto{\nabla_\oplus} \bW_S(k)
\]
is the classical addition map on $\bW_S(k)$.
\end{lemma}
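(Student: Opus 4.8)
The plan is to compare both maps on ghost coordinates and then invoke uniqueness. First I would unwind Definition \ref{d:Vplusmap} for the fold map $\nabla : S \coprod S \to S$. For $t \in S$ the fibre $\nabla^{-1}(t) \subset S \coprod S$ consists of exactly the two copies of $t$; since $S$ is an ordinary truncation set, each of these copies $s$ satisfies $|s| = |t| = t$, so every coefficient $\frac{|t|}{|s|}$ occurring in the formula for $\nabla^w_\oplus$ equals $1$. Hence, under the canonical isomorphism $k^{S \coprod S} \cong k^S \times k^S$, the map $\nabla^w_\oplus$ is simply the addition map $k^S \times k^S \to k^S$, $(\langle x_t \rangle, \langle x'_t \rangle) \mapsto \langle x_t + x'_t \rangle$.

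Next I would recall that the classical addition $\bW_S(k) \times \bW_S(k) \to \bW_S(k)$ is \emph{by definition} the unique map, functorial in $k$, lifting the addition map on $k^S$ along the ghost map. Combining this with the previous paragraph and with the fact (already recorded earlier in this section) that the isomorphism $\bW_{S \coprod S}(k) \cong \bW_S(k) \times \bW_S(k)$ is compatible with the isomorphism $k^{S \coprod S} \cong k^S \times k^S$ under the ghost maps, both $\nabla_\oplus$ and the classical addition map fit into the same commuting square having $\nabla^w_\oplus$ along the bottom and the ghost maps as the vertical arrows.

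Finally I would invoke uniqueness: a natural transformation $\bW_S(-) \times \bW_S(-) \to \bW_S(-)$ making this square commute is unique, since it is determined by its value on the universal pair $(a_s),(b_s) \in \bW_S(\bZ[a_s,b_s]_{s \in S})$, and the ghost map is injective over the torsion-free ring $\bZ[a_s,b_s]_{s \in S}$. Therefore $\nabla_\oplus$ coincides with the classical addition map.

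I do not expect a genuine obstacle here; the only point requiring care is bookkeeping the two identifications $k^{S \coprod S} \cong k^S \times k^S$ and $\bW_{S \coprod S}(k) \cong \bW_S(k) \times \bW_S(k)$ and checking that they are the compatible ones, together with the observation that on an ordinary truncation set the weights $\frac{|t|}{|s|}$ trivialize — so that $\nabla^w_\oplus$ really is honest coordinatewise addition rather than a twisted variant.
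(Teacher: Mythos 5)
Your argument is correct and is essentially the paper's own proof: the fibre of $\nabla$ over $t$ is the two copies of $t$, all coefficients $\frac{|t|}{|s|}$ equal $1$, so $\nabla^w_\oplus$ is coordinatewise addition on ghost coordinates, which is exactly how the classical addition is defined; uniqueness via the universal case finishes it. The paper states this more tersely, but the content is the same.
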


\begin{proof}
This follows immediately from Definition \ref{d:Vplusmap} because in this case each $\frac{|t|}{|s|} = 1$ and the classical addition map on Witt vectors is defined by using the addition on ghost coordinates.
\end{proof}

Of course the fold map also furnishes $\bW_S(k)$ with an addition map when $S$ is a truncation poset, in the same way. By another application of Dwork's Lemma we find that $-w(a_s)$ is in the image of the ghost map, and this implies that $\bW_S(k)$ has additive inverses. Hence $\bW_S(k)$ is indeed an abelian group.

\begin{lemma} \label{l:fstaradditive}
Let $f : S \to T$ be an $R$-map of truncation posets. Then $f^* : \bW_T(k) \to \bW_S(k)$ is a group homomorphism.
\end{lemma}

\begin{proof}
It suffices to prove that the diagram
\[ \xymatrix{
 \bW_T(k) \times \bW_T(k) \ar[r]^-{f^* \times f^*} \ar[d]_{\nabla_\oplus} & \bW_S(k) \times \bW_S(k) \ar[d]^{\nabla_\oplus} \\
 \bW_T(k) \ar[r]^-{f^*} & \bW_S(k)
} \]
commutes on ghost coordinates, which is clear because $(\langle x_t \rangle, \langle y_t \rangle)$ maps to $\langle x_{f(s)} + y_{f(s)} \rangle$  both ways.
\end{proof}

\begin{lemma} \label{l:foplusadditive}
Let $f : S \to T$ be a $T$-map of truncation posets. Then $f_\oplus : \bW_S(k) \to \bW_T(k)$ is a group homomorphism.
\end{lemma}

\begin{proof}
This is clear because the diagram
\[ \xymatrix{
 S \coprod S \ar[r]^-{f \coprod f} \ar[d]_{\nabla} & T \coprod T \ar[d]^{\nabla} \\
 S \ar[r]^-{f} & T
} \]
commutes in $\TP^T$.
\end{proof}

\begin{lemma}
Let $S$ be an ordinary truncation set, let $n \in \bN$, and let $f : S/n \to S$ be the multiplication by $n$ map. Then
\[
 \bW_{S/n}(k) \xto{f_\oplus} \bW_S(k)
\]
is the classical Verschiebung map $V_n$.
\end{lemma}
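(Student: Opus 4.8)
The plan is to unwind Definition \ref{d:Vplusmap} in this special case and check that the resulting map on ghost coordinates coincides with the standard ghost-coordinate description of the classical $V_n$; uniqueness then forces $f_\oplus = V_n$.

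First I would identify the fibers of $f$. Since $f(u)=nu$, for $t \in S$ we have $f^{-1}(t) = \{t/n\}$ when $n \mid t$ (and indeed $t/n \in S/n$, since $n\cdot(t/n)=t \in S$), while $f^{-1}(t)=\emptyset$ when $n \nmid t$. When $n \mid t$ the single element $s=t/n$ satisfies $|t|/|s| = t/(t/n) = n$, using that both $S$ and $S/n$ carry the identity weight function. Substituting into the formula for $f_\oplus^w$ in Definition \ref{d:Vplusmap}, we get, for $\langle y_u \rangle \in k^{S/n}$,
\[
 (f_\oplus^w \langle y_u \rangle)_t = \begin{cases} n\, y_{t/n} & \text{if } n \mid t, \\ 0 & \text{if } n \nmid t. \end{cases}
\]
This is exactly the map $k^{S/n} \to k^S$ used to define $V_n$ on ghost coordinates in Section \ref{s:gentrun}.

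It remains to conclude $f_\oplus = V_n$. Both maps are $k$-natural and fit into the square of Definition \ref{d:Vplusmap} with horizontal legs the ghost maps and right-hand leg $f_\oplus^w$ --- for $f_\oplus$ by definition, and for $V_n$ because the displayed formula is one of the two standard descriptions of the classical Verschiebung. As in the proof of Lemma \ref{l:Vmapwelldef}, it therefore suffices to check that they agree on the canonical Witt vector over the representing ring $\bZ[a_u]_{u \in S/n}$, where the ghost map is injective because the ring is torsion free. There is no real obstacle here beyond this bookkeeping; the only points requiring care are the computation of $f^{-1}(t)$ together with the weight ratio $|t|/|s|=n$, and the appeal to uniqueness via the torsion-free representing ring, just as in the preceding lemmas.
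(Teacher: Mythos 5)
Your proof is correct and follows the same route as the paper: identify the fibers of $f$ (empty when $n \nmid t$, the singleton $\{t/n\}$ when $n \mid t$), note the coefficient $|t|/|s|=n$, and observe that the resulting formula for $f_\oplus^w$ is exactly the ghost-coordinate definition of $V_n$. The paper's proof is just a terser version of this, leaving the uniqueness step implicit.
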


\begin{proof}
In this case $f$ is injective and each $\frac{|t|}{|s|} = n$, so Definition \ref{d:Vplusmap} says that on ghost coordinates we have
\[
 (f^w_\oplus \langle x_s \rangle)_t = \begin{cases} nx_{t/n} \quad & \textnormal{if $n \mid t$} \\ 0 \quad & \textnormal{if $n \nmid t$} \end{cases}
\]
But this is one equivalent definition of $V_n$.
\end{proof}

Any $T$-map factors as a composite of addition maps and Verschiebung maps, so it is possible to combine the existence of addition and Verschiebung maps on classical Witt vectors to prove Lemma \ref{l:Vmapwelldef}.

\subsection{Multiplication and Norm maps}
Finally, the maps in $\TP^N$ will encode multiplication and norm maps. Once again we start with the definition. Recall the definition of $\widehat{f^{-1}}(t)$ from Definition \ref{d:Nmap}.

\begin{definition} \label{d:Ntimesmap}
Let $f : S \to T$ be an $N$-map. Then
\[
 f_\otimes : \bW_S(k) \to \bW_T(k)
\]
is defined to be the unique map of sets, natural in $k$, such that the diagram
\[ \xymatrix{
 \bW_S(k) \ar[r]^-w \ar[d]_{f_\otimes} & k^S \ar[d]^{f^w_\otimes} \\
 \bW_T(k) \ar[r]^-w & k^T
} \]
commutes. Here $f^w_\otimes$ is the map defined by
\[
 (f^w_\otimes \langle x_s \rangle)_t = \prod_{s \in \widehat{f^{-1}}(t)} x_s^{|t|/|s|}.
\]
\end{definition}

Note that we needed the strong finiteness condition in Definition \ref{d:Nmap} to make sense of the map $f^w_\otimes$.

\begin{lemma} \label{l:Nmapwelldef}
Given an $N$-map $f : S \to T$, the composite $f^w_\otimes \circ w$ is contained in the image of the ghost map.
\end{lemma}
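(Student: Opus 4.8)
The plan is to verify the criterion of Dwork's Lemma (Lemma~\ref{l:Dwork}) directly, by the same kind of term-by-term bookkeeping used in the proofs of Lemmas~\ref{l:Rmapwelldef} and~\ref{l:Vmapwelldef}. Work with the representing ring $k = \bZ[a_s]_{s \in S}$ and the canonical Witt vector $(a_s)$, with $\langle x_s \rangle = w(a_s)$ and $\langle y_t \rangle = f^w_\otimes \langle x_s \rangle$, and take $\phi_p(a_s) = a_s^p$. Fix a prime $p$ and $t \in T$ with $p \mid |t|$; we must show $y_t \equiv \phi_p(y_{t/p}) \bmod p^{\nu_p(|t|)}$.

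First I would set up the comparison between the index sets $\widehat{f^{-1}}(t)$ and $\widehat{f^{-1}}(t/p)$. By Lemma~\ref{l:minimaldiv}, since $t/p \mid t$, there is a bijection $\widehat{f^{-1}}(t/p) \to \widehat{f^{-1}}(t)$ sending $s'$ to the unique $s \in \widehat{f^{-1}}(t)$ with $s' \mid s$; under this bijection $s = s'/d$ for a suitable divisor $d$ of $|f(s')|$, or rather $s'$ divides $s$ with $|s|/|s'|$ equal to either $1$ or $p$ depending on whether $p \mid |s'|$. So partition $\widehat{f^{-1}}(t)$ into those $s$ for which the corresponding $s'$ has $|s| = |s'|$ (call these ``type A'') and those for which $|s| = p|s'|$ (``type B''). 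Then I would write
\[
 y_t = \prod_{s \in \widehat{f^{-1}}(t)} x_s^{|t|/|s|}, \qquad
 \phi_p(y_{t/p}) = \prod_{s' \in \widehat{f^{-1}}(t/p)} \phi_p(x_{s'})^{|t|/(p|s'|)}.
\]
For a type-B index, $|t|/|s| = |t|/(p|s'|)$, so the two corresponding factors are $x_s^{|t|/|s|}$ and $\phi_p(x_{s'})^{|t|/|s|}$; since $s = s'/p$ and $\langle x_s \rangle$ is in the image of the ghost map (it equals $w(a_s)$), Dwork's Lemma applied to $\langle x_s \rangle$ gives $x_s \equiv \phi_p(x_{s'}) \bmod p^{\nu_p(|s|)}$, and since $|s| \mid |t|/|s'| \cdot |s'|$... more carefully, $\nu_p(|s|) \geq 1$ here, and one checks these factors agree modulo a sufficiently high power of $p$ using that raising a mod-$p^a$ congruence to a $p$-th power improves it to mod $p^{a+1}$ (the standard lemma behind Dwork congruences). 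For a type-A index, $p \nmid |s'| = |s|$, so $p \mid |t|/|s|$; then $x_s^{|t|/|s|}$ and $\phi_p(x_{s'})^{|t|/|s|}$ are both $p$-th powers (indeed $|t|/|s|$-th powers with $p$ dividing the exponent), and $x_s \equiv \phi_p(x_{s'}) = x_{s'}^p \cdot(\text{correction})$... here instead I would argue that $x_s \equiv \phi_p(x_s) \bmod p$ trivially and compare $x_s^{|t|/|s|}$ with $x_{s'}^{p \cdot |t|/(p|s|)}$-type expressions, using $s = s'$ as elements since $|s| = |s'|$ forces $s = s'$. In fact when $|s| = |s'|$ and $s' \mid s$ we get $s = s'$, so the type-A factors are literally $x_s^{|t|/|s|}$ versus $\phi_p(x_s)^{|t|/|s|}$, and since $x_s \equiv \phi_p(x_s) \bmod p$ while $p^{\nu_p(|t|)} \mid$ the exponent $|t|/|s|$ in the sense that $\nu_p(|t|/|s|) = \nu_p(|t|)$, raising to that power gives agreement $\bmod\ p^{\nu_p(|t|)}$ by the same power-raising lemma (applied $\nu_p(|t|)$ times).

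Assembling: both $y_t$ and $\phi_p(y_{t/p})$ are products over the same index set of factors that are pairwise congruent modulo $p^{\nu_p(|t|)}$, and a product of such congruences is again a congruence modulo $p^{\nu_p(|t|)}$ (as long as we are multiplying finitely many terms, which holds by the strong finiteness condition in Definition~\ref{d:Nmap}). Hence $y_t \equiv \phi_p(y_{t/p}) \bmod p^{\nu_p(|t|)}$ for all $p \mid |t|$, so by Lemma~\ref{l:Dwork} the vector $\langle y_t \rangle$ is in the image of the ghost map, which is exactly the claim. The main obstacle I anticipate is the bookkeeping around type-A indices and making the exponent-divisibility argument precise: one needs the elementary fact that $a \equiv b \bmod p^m$ implies $a^p \equiv b^p \bmod p^{m+1}$, and to track how many factors of $p$ the exponent $|t|/|s|$ contributes versus how much congruence-improvement that buys; the type-B case is the closer analogue of the classical Witt-vector norm computation and should go through as in \cite{An_norm}.
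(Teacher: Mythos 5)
Your overall strategy is exactly the paper's: reduce to Dwork's Lemma over the representing ring $\bZ[a_s]$, match $\widehat{f^{-1}}(t/p)$ with $\widehat{f^{-1}}(t)$ via Lemma~\ref{l:minimaldiv}, and compare the two products factor by factor. Your two cases ($s=s'$ and $s=ps'$) are precisely the paper's cases (2) and (1), and your type-B analysis is essentially right (modulo the direction slip ``$s=s'/p$'', which should read $s'=s/p$): there the exponents $|t|/|s| = |t|/(p|s'|)$ on the two sides agree, Dwork applied to $w(a_s)$ gives $x_s \equiv \phi_p(x_{s'}) \bmod p^{\nu_p(|s|)}$, and the power-raising lemma lifts this to $\bmod\ p^{\nu_p(|s|)+\nu_p(|t|/|s|)} = p^{\nu_p(|t|)}$.

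The type-A verification, however, is wrong as written, and this is the one case where the exponents do \emph{not} match --- which is the entire content of that case. The factor of $\phi_p(y_{t/p})$ indexed by $s'=s$ is $\phi_p(x_s)^{|t/p|/|s|} = \phi_p(x_s)^{|t|/(p|s|)}$, not $\phi_p(x_s)^{|t|/|s|}$: its exponent is $p$ times smaller than the exponent of $x_s$ in $y_t$. Moreover the congruence you invoke, $x_s \equiv \phi_p(x_s) \bmod p$, is false in $\bZ[a_s]$ (already $\phi_p(a_s) = a_s^p \not\equiv a_s \bmod p$); what holds is $\phi_p(x_s) \equiv x_s^p \bmod p$. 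The correct comparison is $x_s^{|t|/|s|} = (x_s^p)^{|t|/(p|s|)}$ versus $\phi_p(x_s)^{|t|/(p|s|)}$; since $x_s^p \equiv \phi_p(x_s) \bmod p$ and $\nu_p\bigl(|t|/(p|s|)\bigr) = \nu_p(|t|)-1$ (here one uses $p \nmid |s|$, which follows from minimality of $s'$ together with $p \mid |f(s')|/|t/p|$), raising to that power gives agreement $\bmod\ p^{\nu_p(|t|)}$. A further small point: the dichotomy is governed by whether $p$ divides $|f(s')|/|t/p|$ (equivalently, whether $t \mid f(s')$ already), not by whether $p \mid |s'|$ --- both quantities can be prime to $p$ simultaneously. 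With these corrections your argument coincides with the paper's, which sets up the same two cases and leaves the factor-by-factor congruence as a routine check.
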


\begin{proof}
First let $A=\bZ[a_s]_{s \in S}$, $(a_s) \in \bW_S(A)$, $\langle x_s \rangle = w(a_s)$ and $\langle y_t \rangle = f_\otimes^w \langle x_s \rangle$ as in the proof of Lemma \ref{l:Vmapwelldef} above, and let $\phi_p$ be the ring map defined by $a_s \mapsto a_s^p$. To unclutter the notation we write $t/s$ for $|t|/|s|$.

Suppose $\nu_p(|t|) \geq 1$. We need to verify that $y_t \equiv \phi_p(y_{t/p}) \mod p^{\nu_p(|t|)}$. We have
\[
 y_t = \prod_{s \in \widehat{f^{-1}}(t)} x_s^{t/s} = \prod_{s \in \widehat{f^{-1}}(t)} \Big( \sum_{u \mid s} |u|a_u^{s/u} \Big)^{t/s}
\]
and
\[
 y_{t/p} = \prod_{s' \in \widehat{f^{-1}}(t/p)} x_{s'}^{t/ps} = \prod_{s' \in \widehat{f^{-1}}(t/p)} \Big( \sum_{v \mid s'} |v|a_v^{s'/v} \Big)^{t/ps'}.
\]
It follows that
\[
 \phi_p(y_{t/p}) = \prod_{s' \in \widehat{f^{-1}}(t/p)} \Big( \sum_{v \mid s'} |v|a_v^{ps'/v} \Big)^{t/ps'}.
\]

To proceed we need to understand the relationship between $\widehat{f^{-1}}(t)$ and $\widehat{f^{-1}}(t/p)$. Consider $s' \in \widehat{f^{-1}}(t/p)$. Then we get an $s \in \widehat{f^{-1}(t)}$ as in the following two cases.
\begin{enumerate}
 \item $p \nmid \frac{|f(s')|}{|t/p|}$. Then $s = ps' \in \widehat{f^{-1}}(t)$.
 \item $p \mid \frac{|f(s')|}{|t/p|}$. Then $s=s' \in \widehat{f^{-1}}(t)$.
\end{enumerate}
Note that for this we had to use the strong fibration condition in Definition \ref{d:Nmap}.

In each case it is straightforward to verify that the factor corresponding to $s'$ in $\phi_p(y_{t/p})$ and the factor corresponding to $s$ in $y_t$ are congruent $\mod p^{\nu_p(|t|)}$.

Once again the result follows for any $k$ by naturality.
\end{proof}

As for the other types of maps it follows that Definition \ref{d:Ntimesmap} defines a unique map $f_\otimes : \bW_S(k) \to \bW_T(k)$.

\begin{lemma}
Let $S$ be an ordinary truncation set and let $\nabla : S \coprod S \to S$ be the fold map. Then
\[
 \bW_S(k) \times \bW_S(k) \cong \bW_{S \coprod S}(k) \xto{\nabla_\otimes} \bW_S(k)
\]
is the classical multiplication map on $\bW_S(k)$.
\end{lemma}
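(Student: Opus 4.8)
The plan is to verify the statement on ghost coordinates, exactly as in the preceding lemmas for the fold map. First I would check that $\nabla : S \coprod S \to S$ really is an $N$-map, so that $\nabla_\otimes$ is defined at all. Since $S$ is an ordinary truncation set it is connected (it has a unique element of $|-|$-value $1$, namely $1 \in S$, assuming $S \neq \emptyset$), so $S \coprod S$ has exactly two connected components, each mapped isomorphically to $S$ by $\nabla$. The strong fibration condition is then immediate: given $s'$ in one of the two copies and any $t' \in S$, the copy of $t'$ lying in the same component as $s'$ maps to $t'$, and $t' \mid t'$. The finiteness condition holds because, as computed below, $\widehat{\nabla^{-1}}(t)$ has exactly two elements.

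Next I would compute $\widehat{\nabla^{-1}}(t)$ precisely. Write $s_1, s_2$ for the two copies in $S \coprod S$ of an element $s \in S$. The set $\{s' : t \mid \nabla(s')\}$ consists of $s_1$ and $s_2$ for all $s \in S$ with $t \mid s$; since $t$ itself lies in $S$, its minimal elements are $t_1$ and $t_2$. Both satisfy $|t_i| = |t|_S = |t|$, so $|t|/|t_i| = 1$, and therefore Definition \ref{d:Ntimesmap} gives
\[
 (\nabla^w_\otimes \langle x_s \rangle)_t = x_{t_1}^{\,|t|/|t_1|} x_{t_2}^{\,|t|/|t_2|} = x_{t_1} x_{t_2}.
\]
Under the identifications $\bW_{S \coprod S}(k) \cong \bW_S(k) \times \bW_S(k)$ and $k^{S \coprod S} \cong k^S \times k^S$, which by the earlier lemma are compatible with the ghost maps, $x_{t_1}$ and $x_{t_2}$ are precisely the $t$-th ghost components of the two factors. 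Hence $\nabla^w_\otimes$ is the componentwise multiplication map $k^S \times k^S \to k^S$.

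Finally I would invoke uniqueness. The classical multiplication on $\bW_S(k)$ is by definition the unique map, functorial in $k$, making the ghost map a ring homomorphism, and in particular it induces componentwise multiplication on ghost coordinates. Thus both $\nabla_\otimes$ and the classical multiplication fit into the commuting square of Definition \ref{d:Ntimesmap} with the same map $\nabla^w_\otimes$ along the top, so by the uniqueness already established there (tested on the universal Witt vectors over $\bZ[a_s, b_s]_{s \in S}$) they agree. I do not expect any genuine obstacle; the only point needing a moment's care is the identification $\widehat{\nabla^{-1}}(t) = \{t_1, t_2\}$, i.e. that the minimality condition in Definition \ref{d:Nmap} picks out exactly the two copies of $t$ and nothing larger.
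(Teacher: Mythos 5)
Your proof is correct and follows the same route as the paper, which simply observes that the classical multiplication is defined via componentwise multiplication on ghost coordinates; you have filled in the details (that $\nabla$ is an $N$-map, that $\widehat{\nabla^{-1}}(t)=\{t_1,t_2\}$ with trivial exponents, and the uniqueness argument) that the paper leaves as "clear."
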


\begin{proof}
This is clear because the classical multiplication map is defined via the multiplication map on ghost coordinates.
\end{proof}

Of course the fold map also furnishes $\bW_S(k)$ with a multiplication map when $S$ is a truncation poset, in the same way. And it is clear that multiplication distributes over addition because this holds on ghost coordinates, so our definitions make $\bW_S(k)$ into a commutative ring.

\begin{lemma}
Let $f : S \to T$ be an $R$-map of truncation posets. Then $f^* : \bW_T(k) \to \bW_S(k)$ is multiplicative.
\end{lemma}

\begin{proof}
This is similar to the proof of Lemma \ref{l:fstaradditive}.
\end{proof}

It follows that $f^* : \bW_T(k) \to \bW_S(k)$ is a ring homomorphism.

\begin{lemma}
Let $f : S \to T$ be an $N$-map of truncation posets. Then $f_\otimes : \bW_S(k) \to \bW_T(k)$ is multiplicative.
\end{lemma}

\begin{proof}
This is similar to the proof of Lemma \ref{l:foplusadditive}.
\end{proof}

\begin{lemma}
Let $S$ be an ordinary truncation set, let $n \in \bN$, and let $f : S \to \langle n \rangle S$ be the multiplication by $n$ map. Then
\[
 \bW_S(k) \xto{f_\otimes} \bW_{\langle n \rangle S}(k)
\]
is the ``classical'' norm map $N_n$.
\end{lemma}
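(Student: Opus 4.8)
The plan is to reduce the claim to a comparison of ghost-coordinate formulas. First I would recall from Example \ref{ex:multn2} that the multiplication by $n$ map $f : S \to \langle n \rangle S$ is a map of truncation posets, and observe that it is in fact an $N$-map, being of the form $U \xto{n} \langle n \rangle U$ as in Lemma \ref{l:decomposeNmap}. Since $S$ and $\langle n \rangle S$ are ordinary truncation sets, $|-|$ is the identity on both and $f(s) = ns$, so $f_\otimes$ is defined; by Definition \ref{d:Ntimesmap} it is the unique map, functorial in $k$, making the ghost square commute with bottom map $\langle x_s \rangle \mapsto \langle y_t \rangle$, where $y_t = \prod_{s \in \widehat{f^{-1}}(t)} x_s^{|t|/|s|}$.

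The key step is to identify $\widehat{f^{-1}}(t)$ for $t \in \langle n \rangle S$, and I claim it is the singleton $\{t/\gcd(n,t)\}$. Setting $g = \gcd(n,t)$, for $s \in S$ we have $t \mid f(s) = ns$ if and only if $\frac{t}{g} \mid \frac{n}{g} s$, and since $\gcd(\frac{t}{g}, \frac{n}{g}) = 1$ this is equivalent to $\frac{t}{g} \mid s$. Writing $t = e s_0$ with $e \mid n$ and $s_0 \in S$ (possible because $t \in \langle n \rangle S$), we have $t = e s_0 \mid n s_0$, so $\frac{t}{g} \mid s_0$ and hence $\frac{t}{g} \in S$. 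Therefore $\{s \in S \mid t \mid f(s)\} = \{s \in S \mid \frac{t}{g} \mid s\}$, whose unique minimal element under division is $\frac{t}{g}$. Consequently $|t|/|t/g| = t/(t/g) = g = \gcd(n,t)$.

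Feeding this into Definition \ref{d:Ntimesmap} yields $(f^w_\otimes \langle x_s \rangle)_t = x_{t/g}^{\,g}$ with $g = \gcd(n,t)$, which is exactly the formula defining the classical norm $N_n : \bW_S(k) \to \bW_{\langle n \rangle S}(k)$ on ghost coordinates. Since $f_\otimes$ and $N_n$ are both characterized as the unique maps making their respective ghost squares commute, and the two ghost-coordinate maps coincide, they agree; one makes this formal by evaluating at the representing ring $k = \bZ[a_s]_{s \in S}$ exactly as in the proof of Lemma \ref{l:Nmapwelldef}.

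I do not anticipate a genuine obstacle: the only point requiring care is the short divisibility argument verifying that $t/\gcd(n,t)$ lies in $S$ and is the unique minimal element of $\{s \in S \mid t \mid f(s)\}$; once that is in place the rest is bookkeeping with the definitions already set up in the paper.
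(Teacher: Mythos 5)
Your proposal is correct and follows the same route as the paper, which simply asserts that a comparison of $f^w_\otimes$ with the ghost-coordinate formula for $N_n$ shows they agree; you have supplied the details of that comparison (the identification $\widehat{f^{-1}}(t)=\{t/\gcd(n,t)\}$ and the verification that $t/\gcd(n,t)\in S$), all of which check out.
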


\begin{proof}
A comparison of the map $f_\otimes^w$ with the formula for $N_n$ on ghost coordinates from \cite{An_norm} shows that they agree.
\end{proof}

\section{Combining $T$-maps and $R$-maps} \label{s:Mackey}
In this section we define a category $\TP^{TR}$ by combining the categories $\TP^T$ and $(\TP^R)^\op$. The following definition is more complicated than it needs to be; we present it this way in anticipation of the category $\TP^{TNR}$.

\begin{definition}
The category $\TP^{TR}$ has objects the truncation posets, and a morphism $S \to T$ in $\TP^{TR}$ is an equivalence class of diagrams
\[
 S \overset{f_1}{\longrightarrow} A_1 \overset{f_2}{\longrightarrow} A_2 \overset{f_3}{\longrightarrow} \ldots \overset{f_n}{\longrightarrow} A_n \overset{f_{n+1}}{\longrightarrow} T
\]
where each $f_i$ is a map in one of $\TP^T$ and $(\TP^R)^\op$. The equivalence relation on such diagrams is generated by the following types of relations:
\begin{enumerate}
 \item Isomorphism of diagrams.
 \item Insertion of an identity morphism.
 \item Composition if $f_i$ and $f_{i+1}$ are in the same category $\TP^T$ or $(\TP^R)^\op$.
 \item Commuting an $R$-map past a $T$-map as in Definition \ref{d:commuteRT} below.
\end{enumerate}
\end{definition}

Given an $R$-map $f : S \to T$ we abuse notation and write $f^* : T \to S$ for the corresponding map in $\TP^{TR}$, and given a $T$-map $f : S \to T$ we write $f_\oplus : S \to T$ for the corresponding map in $\TP^{TR}$. (Hence with our notation the functor $\bW(k)$ takes $f^*$ to $f^*$ and $f_\oplus$ to $f_\oplus$.) We need to explain how to commute an $R$-map past a $T$-map.

\begin{definition} \label{d:commuteRT}
Given a diagram
\[
 S \xto{f} A \xfrom{g} T
\]
with $f \in \TP^T$ and $g \in \TP^R$, we declare the composite $g^* \circ f_\oplus$ to be equal to the composite $f'_\oplus \circ (g')^*$, where
\[
 S \xfrom{g'} f_\oplus^* T \xto{f'} T
\]
and $f_\oplus^* T$ is defined by
\[
 f_\oplus^* T = \Big\{ (s,t, \xi) \quad | \quad f(s)=g(t) \textnormal{ and } \xi \in C_m, \,\, m=\gcd\Big(\frac{|f(s)|}{|s|}, \frac{|g(t)|}{|t|}\Big) \Big\}.
\]
Here $f' : f_\oplus^* T \to T$ and $g' : f_\oplus^* T \to S$ are the obvious maps, sending $(s,t,\xi)$ to $t$ and $s$, respectively. The norm on $f_\oplus^* T$ is defined by
\[
 |(s,t,\xi)| = \gcd(|s|,|t|),
\]
and we say $(s_1,t_1, \xi_1) \mid (s_2,t_2,\xi_2)$ if $s_1 \mid s_2$, $t_1 \mid t_2$, and $\xi_1=\xi_2$. Here we identify $C_{m_1}$ and $C_{m_2}$, using that $|f(s_2)|/|s_2|=|f(s_1)|/|s_1|$ and $|g(t_2)|/|t_2|=|g(t_1)|/|t_1|$.
\end{definition}

In other words, we take the usual pullback $S \times_A T = f^* T$ but count each $(s,t)$ with multiplicity to account for the fact that $|(s,t)|=\gcd(|s|,|t|)$ rather than the expected $|f(s)|=|g(t)|$. The cyclic group can be thought of as a bookkeeping device.

\begin{lemma}
With notation as above, $f_\oplus^* T$ is a truncation poset, $g' : f_\oplus^* T \to S$ is an $R$-map, and $f' : f_\oplus^* T \to T$ is a $T$-map.
\end{lemma}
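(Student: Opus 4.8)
The plan is to verify the four axioms of Definition \ref{d:gentrun} for $f_\oplus^* T$ directly, and then the defining conditions for $g'$ and $f'$. The one fact that makes everything go through is this: since $f$ and $g$ are $R$-maps, the ratio $|f(s)|/|s|$ depends only on the connected component of $s$ in $S$, and $|g(t)|/|t|$ only on the connected component of $t$ in $T$ (if $s_1\mid s_2$ then $|f(s_2)|/|f(s_1)|=|s_2|/|s_1|$, so $|f(s_2)|/|s_2|=|f(s_1)|/|s_1|$, and similarly for $g$). Consequently, if $(s_1,t_1,\xi_1)\mid(s_2,t_2,\xi_2)$ in $f_\oplus^* T$, then writing $a=|s_2|/|s_1|$ and $b=|t_2|/|t_1|$ the equalities $f(s_i)=g(t_i)$ force $|f(s_2)|/|f(s_1)|=a$ and $|g(t_2)|/|g(t_1)|=b$ to be equal, so $a=b$; hence $\gcd(|s_2|,|t_2|)=a\,\gcd(|s_1|,|t_1|)$, and the cyclic groups $C_{m_1},C_{m_2}$ attached to the two elements literally coincide, so the identification used to define the partial order is the identity. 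I will call this common value $a$ the \emph{ratio} of the relation $(s_1,t_1,\xi_1)\mid(s_2,t_2,\xi_2)$.

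Granting this, axiom (1) is immediate from monotonicity of $\gcd$, and axiom (2) is then automatic since $|-|$ takes values in $\bN$. For axiom (3), given $d\mid\gcd(|s|,|t|)$, I would produce the element $(s/d,t/d,\xi)$: one has $d\mid|s|$ and $d\mid|t|$ so $s/d\in S$ and $t/d\in T$ exist, and $f(s/d)$ and $g(t/d)$ are both equal to the unique divisor of $f(s)=g(t)$ of norm $|f(s)|/d$ (axiom (3) for $A$, plus the $R$-map ratio identity, and $d\mid|f(s)|$ since $|s|\mid|f(s)|$), so $(s/d,t/d,\xi)$ really lies in $f_\oplus^* T$ and has norm $\gcd(|s|,|t|)/d$. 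Uniqueness: a competitor $(s'',t'',\xi'')\mid(s,t,\xi)$ of that norm has some ratio $a$, and $\gcd(|s|,|t|)/a=\gcd(|s''|,|t''|)=\gcd(|s|,|t|)/d$ forces $a=d$, whence $s''=s/d$ and $t''=t/d$ by axioms (3) for $S$ and $T$, and $\xi''=\xi$ since $m''=m$. Axiom (4) is analogous: a competitor $(s',t',\xi')$ above $(s,t,\xi)$ of norm $e\,\gcd(|s|,|t|)$ has ratio forced to equal $e$, and then $s',t'$ are pinned down by axiom (4) for $S$ and $T$ and $\xi'$ by $m'=m$.

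That $g'$ and $f'$ are $R$-maps is now immediate: monotonicity is built into the partial order, and the ratio identity $\frac{|g'(s_2,t_2,\xi_2)|}{|g'(s_1,t_1,\xi_1)|}=\frac{|s_2|}{|s_1|}=a=\frac{\gcd(|s_2|,|t_2|)}{\gcd(|s_1|,|t_1|)}$, together with its symmetric analogue for $f'$, is exactly the ``common ratio'' observation. For the two $T$-map conditions on $f'$ I would argue: given $(s,t,\xi)$ and $t'\in T$ with $t\mid t'$, we have $f(s)=g(t)\mid g(t')$, so the fibration property of the $T$-map $f$ yields $s'\in S$ with $s\mid s'$ and $f(s')=g(t')$, and then $(s,t,\xi)\mid(s',t',\xi)$ maps to $t'$; finiteness of $(f')^{-1}(t)=\{(s,t,\xi)\mid f(s)=g(t),\ \xi\in C_m\}$ holds because it fibers over the finite set $f^{-1}(g(t))$ with each fiber the finite cyclic group $C_m$, $m\mid|g(t)|/|t|$.

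The only place requiring real care is the uniqueness clauses of axioms (3) and (4) together with keeping track of the $C_m$-factor, and I expect no genuine obstacle there: once the ``common ratio'' observation is isolated, the whole lemma is a diagram chase made slightly fussy by the cyclic groups. As an alternative one could first split $S$ and $T$ into connected components as in Lemma \ref{l:splittingofS}, on each of which $m$ is constant and $f$ takes the normal form of Lemma \ref{l:decomposeTmap}, reducing everything to ordinary truncation sets; but the direct verification above seems no longer.
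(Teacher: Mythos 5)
Your proof is correct and follows the same route as the paper, which simply declares the verification straightforward and spells out only the fibration condition for $f'$ (by lifting $t\mid t'$ through the fibration $f$ exactly as you do); your ``common ratio'' observation is the same fact the paper invokes when identifying $C_{m_1}$ with $C_{m_2}$ in Definition \ref{d:commuteRT}. The remaining details you supply (the truncation-poset axioms for $f_\oplus^* T$, the $R$-map identities, and finiteness of the fibers of $f'$) are all accurate.
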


\begin{proof}
This is straightforward. For example, given $(s,t,\xi) \in f_\oplus^* T$ and $t' \in T$ with $t \mid t'$ we need to find $(s',t',\xi') \in f_\oplus^* T$ with $(s,t,\xi) \mid (s',t',\xi')$. Because $f$ is a $T$-map, so in particular a fibration, there is some $s' \in S$ with $s \mid s'$ and $f(s')=g(t')$. Upon identifying $C_m$ with $C_{m'}$, where $m=\gcd\big( \frac{|f(s)|}{|s|}, \frac{|g(t)|}{|t|} \big)$ as before and $m'=gcd\big( \frac{|f(s')|}{|s'|}, \frac{|g(t')|}{|t'|} \big)$, we can take $\xi'=\xi$ and $(s',t',\xi')$ is the required element of $f_\oplus^* T$.
\end{proof}

Because we can always commute an $R$-map past a $T$-map it follows that the category $\TP^{TR}$ has a much simpler description:

\begin{proposition} \label{p:TPTRuniquespan}
Any map in the category $\TP^{TR}$ defined above can be written uniquely, up to isomorphism of spans, as a composite $f'_\oplus \circ (g')^*$ where $g'$ is an $R$-map and $f'$ is a $T$-map for a diagram
\[
 S \xfrom{g'} A' \xto{f'} T.
\]
\end{proposition}

\begin{proof}
Given a map from $S$ to $T$ as in Definition \ref{d:commuteRT}, we can commute all the $R$-maps past the $T$-maps and compose to obtain a description of the map as $(f')_\oplus \circ (g')^*$ for some $R$-map $g'$ and $T$-map $f'$.

It is possible to give a direct proof that the category $\TP^{TR}$ does not collapse further, but it also follows, after using Lemma \ref{l:commuteRTonWitt} below, because inequivalent spans $S \xfrom{g_1} A_1 \xto{f_1} T$ and $S \xfrom{g_2} A_2 \xto{f_2} T$ give different maps $\bW_S(k) \to \bW_T(k)$ for $k = \bZ$.

To see that inequivalent spans give different maps $\bW_S(\bZ) \to \bW_T(\bZ)$, it suffices to show that given a map $h : \bW_S(\bZ) \to \bW_T(\bZ)$ which on ghost coordinates is given by $h(\langle x_s \rangle)_t = \sum c_{s,t} x_s$ there is at most one span $S \xfrom{g} A \xto{f} T$ with $h = f_\oplus \circ g^*$. To do this, we observe that we must have $A = \coprod A_t$ where $A_t = \{a \in A \quad | \quad f(\frac{a}{|a|}) = t\}$, and that we can describe each $A_t$ explicitly. If $|t|=1$, we must have
\[
 A_t = \{da_{s,t,\xi} \quad | \quad c_{s,t} > 0, \, \xi \in C_{c_{s,t}}, \, d \in \bN \textnormal{ with $dt \in T$} \}.
\]
Here $d_1 a_{s_1,t,\xi_1} \mid d_2 a_{s_2,t,\xi_2}$ if $d_1 \mid d_2$, $s_1 = s_2$ and $\xi_1 = \xi_2$, and the maps are given by $f(da_{s,t,\xi}) = dt$ and $g(da_{s,t,\xi}) = ds$.

For $|t| > 1$, we can define $A_t$ similarly after taking into account the map $h_{<t}$ induced by the span $S \leftarrow \displaystyle\coprod_{t' | t, \, t' \neq t} A_{t'} \to T$.
\end{proof}

Definition \ref{d:commuteRT} is justified by the following result.

\begin{lemma} \label{l:commuteRTonWitt}
Witt notation as in Definition \ref{d:commuteRT} the composite
\[
 \bW_S(k) \xto{f_\oplus} \bW_A(k) \xto{g^*} \bW_T(k)
\]
is equal to the composite
\[
 \bW_S(k) \xto{(g')^*} \bW_{f_\oplus^* T}(k) \xto{f'_\oplus} \bW_T(k).
\]
\end{lemma}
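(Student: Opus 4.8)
The strategy is to verify the identity on ghost coordinates, since both composites are determined, functorially in $k$, by what they do after applying the ghost map $w$; so it suffices to check that the two induced maps $k^S \to k^T$ agree. Fix $\langle x_s \rangle \in k^S$ in the image of $w$. Going one way, we first apply $f_\oplus^w$ to get $\langle y_a \rangle$ with $y_a = \sum_{s \in f^{-1}(a)} \frac{|a|}{|s|} x_s$, and then apply $g^*_w$, landing on the vector whose $t$-component is $y_{g(t)} = \sum_{s \in f^{-1}(g(t))} \frac{|g(t)|}{|s|} x_s$. Going the other way, we first apply $(g')^*_w$, which pulls $\langle x_s \rangle$ back along $g' : f_\oplus^* T \to S$ to the vector indexed by $f_\oplus^* T$ with $(s,t,\xi)$-component equal to $x_s$, and then apply $(f')^w_\oplus$, whose $t$-component is $\sum_{(s,t,\xi) \in (f')^{-1}(t)} \frac{|t|}{|(s,t,\xi)|} x_s$.

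So the heart of the matter is the elementary identity
\[
 \sum_{s \in f^{-1}(g(t))} \frac{|g(t)|}{|s|}\, x_s = \sum_{(s,t,\xi) \in (f')^{-1}(t)} \frac{|t|}{|(s,t,\xi)|}\, x_s,
\]
which I would prove by comparing the two sides term by term for each fixed $s$. On the left, a given $s$ with $f(s) = g(t)$ contributes the single coefficient $\frac{|g(t)|}{|s|}$. On the right, the fiber $(f')^{-1}(t)$ consists of triples $(s,t,\xi)$ with $f(s) = g(t)$ and $\xi \in C_m$ where $m = \gcd\big(\frac{|f(s)|}{|s|}, \frac{|g(t)|}{|t|}\big)$, so the same $s$ contributes $m$ identical terms, each with coefficient $\frac{|t|}{\gcd(|s|,|t|)}$. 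Thus the required equality for each $s$ is
\[
 \frac{|g(t)|}{|s|} = \gcd\Big(\frac{|f(s)|}{|s|}, \frac{|g(t)|}{|t|}\Big) \cdot \frac{|t|}{\gcd(|s|,|t|)}.
\]
Writing $a = |s|$, $b = |t|$, and using $|f(s)| = |g(t)| =: c$ (both equal since $f(s)=g(t)$ in the truncation poset $A$), this is the claim $\frac{c}{a} = \gcd\big(\frac{c}{a}, \frac{c}{b}\big) \cdot \frac{b}{\gcd(a,b)}$, equivalently $\frac{c}{a}\cdot\gcd(a,b) = \gcd\big(\frac{c}{a},\frac{c}{b}\big)\cdot b$. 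This is a standard number-theoretic identity: since $a \mid c$ and $b \mid c$, one has $\gcd\big(\frac{c}{a},\frac{c}{b}\big) = \frac{c}{\mathrm{lcm}(a,b)} = \frac{c \gcd(a,b)}{ab}$, and substituting this in makes both sides equal to $\frac{c\gcd(a,b)}{a}$. I would also double-check the edge case where $f(s) = g(t)$ has no solutions $s$ for a given $t$, in which case both sides are the empty sum, which is fine, and confirm via the divisibility axioms of Definition~\ref{d:gentrun} that $|(s,t,\xi)| = \gcd(|s|,|t|)$ divides $|t|$ so the coefficients on the right are genuine integers.

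The main obstacle is not any single hard step but rather being careful with the bookkeeping: one must correctly identify the fiber $(f')^{-1}(t)$ inside $f_\oplus^* T$, track that the cyclic-group factor $C_m$ contributes exactly $m$ copies of each term, and correctly unwind the definition of $|(s,t,\xi)|$ so that the coefficient $\frac{|t|}{|(s,t,\xi)|}$ from Definition~\ref{d:Vplusmap} matches. Once the combinatorial identity above is isolated, the rest is the number theory lemma just indicated together with the observation that, since both $f_\oplus \circ (g')^*$-type composites land in $\bW(k)$ by the already-established Lemmas~\ref{l:Rmapwelldef} and~\ref{l:Vmapwelldef}, agreement on ghost coordinates after the ghost map $w$ forces agreement of the Witt-vector maps themselves (using that it suffices to check on the universal case $k = \bZ[a_s]_{s\in S}$, where $w$ is injective).
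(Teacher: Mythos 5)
Your proposal is correct and follows essentially the same route as the paper: both reduce to ghost coordinates and verify that the coefficient $\frac{|g(t)|}{|s|}$ equals $\gcd\bigl(\frac{|f(s)|}{|s|},\frac{|g(t)|}{|t|}\bigr)\cdot\frac{|t|}{\gcd(|s|,|t|)}$, with the $\gcd$ factor absorbed by the $C_m$-multiplicity in $f_\oplus^* T$. The only difference is that you spell out the elementary number-theoretic identity that the paper leaves implicit.
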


\begin{proof}
It suffices to prove this on ghost coordinates. Take $\langle x_s \rangle \in k^S$, and suppose the first composite maps this to $\langle y_t \rangle$ and the second composite maps it to $\langle y_t' \rangle$. Then we get
\begin{eqnarray*}
 y_t & = & \sum_{s \in f^{-1}(g(t))} \frac{|g(t)|}{|s|} x_s \\
 & = & \sum_{s \in f^{-1}(g(t))} \frac{|t|}{\gcd(|s|,|t|)} \cdot \gcd\Big(\frac{|f(s)|}{|s|}, \frac{|g(t)|}{|t|}\Big) x_s \\
 & = & \sum_{(s,t,\xi) \in f_\oplus^* T} \frac{|t|}{\gcd(|s|,|t|)} x_s \\
 & = & y_t',
\end{eqnarray*}
which proves the result.
\end{proof}

We have now incorporated restriction maps, Frobenius maps, addition maps and Verschiebung maps in one category of truncation posets. Putting it all together we have proved the following:

\begin{theorem} \label{t:TRfunctor}
Let $k$ be a commtuative ring. There is a functor
\[
 \bW(k) : \TP^{TR} \to Set
\]
sending $S$ to $\bW_S(k)$, such that the composite $(\TP^R)^\op \to \TP^{TR} \to Set$ agrees with the functor in Definition \ref{d:RFdiagmap} and the composite $\TP^T \to \TP^{TR} \to Set$ agrees with the functor in Definition \ref{d:Vplusmap}.
\end{theorem}

\section{Combining $T$-maps, $N$-maps and $R$-maps} \label{s:Tambara}
Finally we define a category $\TP^{TNR}$ by combining $\TP^T$, $\TP^N$ and $(\TP^R)^\op$.

\begin{definition} \label{d:TrunTNR}
The category $\TP^{TNR}$ has objects the truncation posets, and a morphism $S \to T$ in $\TP^{TNR}$ is an equivalence class of diagrams
\[
 S \overset{f_1}{\longrightarrow} A_1 \overset{f_2}{\longrightarrow} A_2 \overset{f_3}{\longrightarrow} \ldots \overset{f_n}{\longrightarrow} A_n \overset{f_{n+1}}{\longrightarrow} T
\]
where each $f_i$ is a map in one of $\TP^T$, $\TP^N$ and $(\TP^R)^\op$. The equivalence relation on such diagrams is generated by the following:
\begin{enumerate}
 \item Isomorphism of diagrams.
 \item Insertion of an identity morphism.
 \item Composition if $f_i$ and $f_{i+1}$ are in the same category $\TP^T$, $\TP^N$ or $(\TP^R)^\op$.
 \item Commuting an $R$-map past a $T$-map as in Definition \ref{d:commuteRT} above.
 \item Commuting an $R$-map past an $N$-map as in Definition \ref{d:commuteNR} below if the pullback $f_\otimes^* T$ exists as in Definition \ref{d:multpullback2} below.
 \item Commuting an $N$-map past a $T$-map as in Definition \ref{d:commuteTN} below.
\end{enumerate}
\end{definition}

Given an $N$-map $f : S \to T$ we write $f_\otimes : S \to T$ for the corresponding map in $\TP^{TNR}$. Note that it is not always possible to commute an $R$-map past an $N$-map, as the following example shows.

\begin{example} \label{ex:notcommutingRpastN}
Let $f : \{1,3\} \to \{1,2,3,6\}$ be the multiplication by $2$ map and let $g : \{1,2,3\} \to \{1,2,3,6\}$ be the inclusion. Then the composite
\[
 \bW_{\{1,3\}}(k) \xto{f_\otimes} \bW_{\{1,2,3,6\}}(k) \xto{g^*} \bW_{\{1,2,3\}}(k)
\]
is given on ghost coordinates by
\[
 \langle x_1, x_3 \rangle \mapsto \langle x_1,x_1^2, x_3,x_3^2 \rangle \mapsto \langle x_1,x_1^2, x_3 \rangle.
\]
But it is impossible to define this map as a composite of an $R$-map followed by an $N$-map. With an $R$-map we can make as many copies as we want of $\langle x_1,x_3 \rangle$, $\langle x_1 \rangle$ and $\langle x_3 \rangle$, using the truncation set $\coprod_{i_1} \{1,3\} \coprod_{i_2} \{1\} \coprod_{i_3} \{3\}$. But this truncation set only maps to $\{1,2,3\}$ via an $N$-map if $i_1=i_2=i_3=0$. 
\end{example}

The analogue of the additive pullback $f^*_\oplus T$ considered in Definition \ref{d:commuteRT} above is the following:

\begin{definition} \label{d:multpullback1}
Given a diagram
\[
 S \xto{f} A \xfrom{g} T,
\]
with $f \in \TP^N$ and $g \in \TP^R$, let
\[
 f_\otimes^* T = \Big\{(s,t,\xi) \quad | \quad g(t) \mid f(s),\,\, s \textnormal{ minimal, } t \textnormal{ maximal, } \xi \in C_m \Big\}.
\]
Here $m = \gcd \big( \frac{|f(s)|}{|s|}, \frac{|g(t)|}{|t|} \big)$ as before, $s$ minimal means that there is no $s' \mid s$ with $s' \neq s$ and $g(t) \mid f(s')$ (for $t$ fixed), and $t$ maximal means that there is no $t \mid t'$ with $t \neq t'$ and $g(t') \mid f(s)$ (for $s$ fixed).
\end{definition}

This is not necessarily a good definition, because if we try to carry this out in the situation in Example \ref{ex:notcommutingRpastN} we find that the obvious maps $f' : f^*_\otimes T \to T$ and $g' : f^*_\otimes T \to S$ are not maps of truncation posets.

\begin{definition} \label{d:multpullback2}
We say the pullback $f^*_\otimes T$ from Definition \ref{d:multpullback1}  exists (as a truncation poset) if the following additional condition is satisfied. For any $(s_1,t_1,\xi_1)$ and $(s_2,t_2,\xi_2)$ in $f^*_\otimes T$ with $s_1$ and $s_2$ in the same connected component of $S$ and $t_1$ and $t_2$ in the same connected component of $T$ we have $|s_1||t_2|=|s_2||t_1|$. We then define
\[
 |(s,t,\xi)| = \gcd(|s|,|t|)
\]
and
\[
 (s_1,t_1,\xi_1) \mid (s_2, t_2,\xi_2) \qquad \textnormal{if} \qquad s_1 \mid s_2, t_1 \mid t_2 \textnormal{ and } \xi_1=\xi_2,
\]
where as usual we have identified $C_{m_1}$ and $C_{m_2}$.
\end{definition}

The pullback $f_\otimes^* T$ often exists. For example, the following gives a sufficient condition.

\begin{definition}
We say a truncation poset $T$ \emph{has joins} if $t \mid t_1$ and $t \mid t_2$ implies that there exists $t'$ with $t_1 \mid t'$ and $t_2 \mid t'$.
\end{definition}

For example, for any $n \in \bN$ the truncation set $\langle n \rangle$ has joins. The truncation set $\{1,2,3\}$ does not have joins.

\begin{lemma} \label{l:joinsimpliespullback}
Suppose $g : T \to A$ is an $R$-map and suppose $T$ has joins. Then the pullback $f_\otimes^* T$ exists for any $N$-map $f : S \to A$.
\end{lemma}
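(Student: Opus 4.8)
The plan is to verify directly, in the setting of Definition~\ref{d:multpullback1}, the single numerical condition of Definition~\ref{d:multpullback2}. Fix triples $(s_1,t_1,\xi_1)$ and $(s_2,t_2,\xi_2)$ in $f^*_\otimes T$ with $s_1,s_2$ in a single connected component $S'$ of $S$ and $t_1,t_2$ in a single connected component $T'$ of $T$; we must show $|s_1||t_2| = |s_2||t_1|$. First I would pass to the relevant components: since $g(t_1)\mid f(s_1)$, the elements $g(t_1),f(s_1)$ lie in one component $A'$ of $A$, and because $f,g$ are maps of truncation posets (which carry connected components to connected components) we get $f(S')\subseteq A'$ and $g(T')\subseteq A'$. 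By Lemma~\ref{l:splittingofS}, via $|-|$ each of $S',T',A'$ is an ordinary truncation set; call them $U$, $V$, $W$. Two facts will then be used: (i) if $s_0\in S'$ is the element with $|s_0|=1$ then $s_0\mid s$ for every $s\in S'$, so $|f(s)|/|s| = |f(s_0)| =: n_0$ is constant on $S'$, and similarly $|g(t)|/|t| =: r_0$ is constant on $T'$; (ii) since $T$ has joins, so does $T'$ (take a join over the bottom element), and since $V$ is a truncation set this forces $V$ to be closed under least common multiples.

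Next I would compute the norms $|s_i|$. Inside the components, the relation $g(t_i)\mid f(s)$ for $s\in S'$ is equivalent to $r_0|t_i|\mid n_0|s|$, i.e.\ to $M_i\mid|s|$ with $M_i := r_0|t_i|/\gcd(r_0|t_i|,n_0)$. Now $s_i\in\widehat{f^{-1}}(g(t_i))$, and minimality there is detected inside $S'$ (a divisor of $s_i$ lies in $S'$), so $s_i$ is a minimal element of $\{s\in S' : M_i\mid|s|\}$. Since $M_i$ divides $|s_i|\in U$ and $U$ is a truncation set, $M_i\in U$; the element of $S'$ of norm $M_i$ (Axiom~(3) of Definition~\ref{d:gentrun}) then lies in this set and divides every element of it, so it is the unique minimal element, whence $|s_i| = M_i$. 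Substituting, the identity $|s_1||t_2| = |s_2||t_1|$ becomes $\gcd(r_0|t_1|,n_0) = \gcd(r_0|t_2|,n_0)$, i.e.\ the ``degree'' $h_i := |f(s_i)|/|g(t_i)| = n_0/\gcd(r_0|t_i|,n_0)$ should be independent of $i$.

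The heart of the proof, and the one step where the joins hypothesis (in the form ``each component $V$ of $T$ is closed under $\mathrm{lcm}$'') is decisive --- hence the main obstacle --- is to deduce $h_1=h_2$ from the maximality conditions on $t_1,t_2$. Unwinding maximality of $t_i$ in $T'\cong V$ gives: there is no integer $c>1$ with $c\mid h_i$ and $|t_i|\,c\in V$. Suppose $h_1\ne h_2$ and pick a prime $p$ with $\nu_p(h_1)\ne\nu_p(h_2)$, say $\nu_p(h_1)>\nu_p(h_2)$, so $p\mid h_1$. Since $\nu_p(h_i)=\max\bigl(0,\nu_p(n_0)-\nu_p(r_0)-\nu_p(|t_i|)\bigr)$ is non-increasing in $\nu_p(|t_i|)$, we get $\nu_p(|t_1|)<\nu_p(|t_2|)$, and in particular $\nu_p(|t_2|)\ge1$. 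As $V$ is closed under $\mathrm{lcm}$, $\mathrm{lcm}(|t_1|,|t_2|)\in V$, and $|t_1|\,p$ divides $\mathrm{lcm}(|t_1|,|t_2|)$ --- checking prime by prime, the only point being $\nu_p(|t_1|)+1\le\nu_p(|t_2|)$ --- so $|t_1|\,p\in V$ since $V$ is a truncation set. Then $c=p$ contradicts the maximality of $t_1$. Hence $h_1=h_2$, so $|s_1||t_2|=|s_2||t_1|$. Since the pair of triples was arbitrary, the condition of Definition~\ref{d:multpullback2} holds and $f^*_\otimes T$ exists.
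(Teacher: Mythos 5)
Your proof is correct, and it reaches the conclusion by a genuinely more explicit route than the paper's. The paper also starts from the join $t'$ of $t_1$ and $t_2$, but instead of computing with valuations it uses the $N$-map condition to produce minimal elements $s_i'$ with $s_i \mid s_i'$ and $g(t') \mid f(s_i')$, notes that $s_1' = s_2'$ because a connected component of $S$ has a unique minimal element over $g(t')$, and then concludes from the chain $\frac{|f(s_1)|}{|g(t_1)|} = \frac{|f(s_1')|}{|g(t')|} = \frac{|f(s_2')|}{|g(t')|} = \frac{|f(s_2)|}{|g(t_2)|}$. The middle equality there is your $s_1'=s_2'$ observation in disguise, but the two outer equalities are exactly the assertion $\gcd(r_0|t_i|,n_0) = \gcd(r_0|t'|,n_0)$, and justifying them requires the maximality of $t_i$ from Definition \ref{d:multpullback1} together with the fact that the component of $T$ is closed under $\mathrm{lcm}$ --- i.e.\ precisely the prime-by-prime argument you give in your last paragraph. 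So your write-up can be read as a self-contained expansion of the paper's proof: what it buys is that the role of the maximality condition (and your closed formula $|s_i| = r_0|t_i|/\gcd(r_0|t_i|,n_0)$ for the minimal preimage) is made completely explicit, at the cost of leaving the poset-theoretic language and working with valuations; the paper's version is shorter and stays closer to the formalism of Lemma \ref{l:minimaldiv}, but leaves the decisive step implicit.
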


\begin{proof}
Suppose we have $(s_1,t_1,\xi_1)$ and $(s_2,t_2,\xi_2)$ in $f_\otimes^* T$ with $s_1$ and $s_2$ in the same connected component of $S$ and $t_1$ and $t_2$ in the same connected component of $T$. Then we need to show that $|s_1||t_2| = |s_2||t_1|$. Let $t'$ be the join of $t_1$ and $t_2$.

Because $f$ is an $N$-map, there are $s_i' \in S$ for $i=1,2$ with $s_i \mid s_i'$ and $g(t') \mid f(s_i')$, and with $s_i'$ minimal. But because $s_1'$ and $s_2'$ are in the same connected component of $S$ and satisfy the same minimality condition we must have $s_1'=s_2'$. It follows that
\[
 \frac{|f(s_1)|}{|g(t_1)|} = \frac{|f(s_1')|}{|g(t')|} = \frac{|f(s_2')|}{|g(t')|} = \frac{|f(s_2)|}{|g(t_2)|} 
\]
and we are done.
\end{proof}

\begin{definition} \label{d:commuteNR}
Let
\[
 S \xto{f} A \xfrom{g} T
\]
be a diagram with $f \in \TP^N$ and $g \in \TP^R$, and suppose the pullback $f_\otimes^* T$ exists. Then we declare the composite $g^* \circ f_\otimes$ to be equal to the composite $f'_\otimes \circ (g')^*$.
\end{definition}

We justify the above definition with the following two results.

\begin{lemma} \label{l:pullbackofNmapisnmap}
Let $f : S \to A$ be an $N$-map, let $g : T \to A$ be an $R$-map, and suppose the pullback $f_\otimes^* T$ exists. Then $f' : f_\otimes^* T \to T$ is an $N$-map.
\end{lemma}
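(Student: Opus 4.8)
The plan is to verify the two defining conditions of an $N$-map for $f' : f_\otimes^* T \to T$, working componentwise via the splitting into connected components. First I would fix a connected component of $f_\otimes^* T$; by the existence hypothesis (Definition \ref{d:multpullback2}), all triples $(s,t,\xi)$ in that component have $|s|/|t|$ equal to a fixed rational number $q$, say in lowest terms $q = b/c$, so that $|(s,t,\xi)| = \gcd(|s|,|t|) = |t|/c = |s|/b$. I would similarly record that along this component $\frac{|f(s)|}{|g(t)|}$ is constant (this uses that $f$ and $g$ are maps of truncation posets, so ratios are locally constant), and hence $m = \gcd\big(\frac{|f(s)|}{|s|}, \frac{|g(t)|}{|t|}\big)$ is constant, justifying the identification of the cyclic groups $C_{m_1} \cong C_{m_2}$ used throughout.

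For the strong fibration condition (condition (\ref{cond:strongfib}) of Definition \ref{d:Nmap}): given $(s,t,\xi) \in f_\otimes^* T$ and $t' \in T$ in the same connected component as $t$, I must produce $(s'',t'',\xi'') \in f_\otimes^* T$ in the same component as $(s,t,\xi)$ with $t' \mid f'(s'',t'',\xi'') = t''$. The natural candidate is to take $t''$ to be a common ``upper bound'' of $t$ and $t'$ inside $T$ — but $T$ need not have joins, so instead I would argue directly: pass to $A$, where $g(t)$ and $g(t')$ lie in the same component, use that $f$ is an $N$-map to find $s'' \in S$ (in the component of $s$) with $g(t'') \mid f(s'')$ for a suitable $t''$ with $t \mid t''$ and $t' \mid t''$ built from the strong fibration property of $f$ together with the unique-divisor axioms (3),(4) of Definition \ref{d:gentrun} applied in $T$ and $A$; then take $s''$ minimal, $t''$ maximal, and $\xi'' = \xi$ under the identification of cyclic groups. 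One checks $g(t'') \mid f(s'')$, minimality/maximality, and $s'' $ in the component of $s$, so $(s'',t'',\xi'') \in f_\otimes^* T$ and divides appropriately. For the finiteness condition, given $t \in T$ I need $\widehat{(f')^{-1}}(t)$ finite: an element is a minimal $(s,t',\xi)$ with $t \mid t'$, so $t'$ is forced (by maximality of $t'$ in the pullback together with axioms (3),(4) in $T$), and then $s$ ranges over a subset of $\widehat{f^{-1}}(g(t'))$, which is finite since $f$ is an $N$-map, and $\xi$ ranges over the finite group $C_m$.

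I expect the main obstacle to be the strong fibration condition: unlike the $T$-map case in Definition \ref{d:commuteRT}, here one cannot simply transport $\xi$ along a divisibility relation and lift, because the element $t''$ one wants in $T$ is constrained both by $t \mid t''$ (to stay above the given element) and by the requirement $g(t'') \mid f(s'')$ with $s''$ minimal (to land in the pullback), and these two constraints interact through $A$ in a way that genuinely uses the existence hypothesis on $f_\otimes^* T$ — without it the maps $f'$, $g'$ fail to be maps of truncation posets at all, as the remark after Definition \ref{d:multpullback1} and Example \ref{ex:notcommutingRpastN} show. So the careful bookkeeping of the ratios $|s|/|t|$ and $|f(s)|/|g(t)|$ being constant on connected components, which is exactly what Definition \ref{d:multpullback2} guarantees, is the crux that makes the lift well-defined; everything else is a routine check against the axioms of Definition \ref{d:gentrun} and Definition \ref{d:Nmap}.
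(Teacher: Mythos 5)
There is a genuine gap in your verification of the strong fibration condition, and it stems from over-reading what Definition \ref{d:Nmap} asks for. Given $(s,t,\xi) \in f_\otimes^* T$ and $\bar{t} \in T$ in the same connected component as $t$, condition (\ref{cond:strongfib}) only requires an element $(s',t',\xi')$ lying in the \emph{same connected component} of $f_\otimes^* T$ as $(s,t,\xi)$ with $\bar{t} \mid t'$; it does \emph{not} require $(s,t,\xi) \mid (s',t',\xi')$, and hence does not require $t \mid t'$. Your plan insists on producing a $t''$ with both $t \mid t''$ and $\bar{t} \mid t''$, i.e.\ a join of $t$ and $\bar{t}$ in $T$. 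You correctly note that $T$ need not have joins, but then still propose to manufacture such a $t''$ from the strong fibration property of $f$ and axioms (3),(4) of Definition \ref{d:gentrun}. This cannot work: those axioms only produce \emph{divisors}, never common multiples, and the fibration property of $f$ lives in $S$ and $A$, not in $T$. A concrete failure: take $S=T=A=\{1,2,3\}$ with $f=g=\mathrm{id}$. The pullback exists (the condition of Definition \ref{d:multpullback2} is trivially satisfied), $f_\otimes^* T$ is essentially the diagonal, and for $t=2$, $\bar{t}=3$ the element $(3,3,\xi)$ in the same component does the job --- but your $t''$ with $2\mid t''$ and $3 \mid t''$ does not exist in $T$. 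Note that Lemma \ref{l:joinsimpliespullback} is a one-way implication: existence of the pullback does not give you joins in $T$.

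The correct route, which is the one the paper takes, bypasses $T$-joins entirely: apply the $N$-map condition for $f$ to $s$ and $g(\bar{t})$ (which lies in the same component of $A$ as $f(s)$) to obtain $s'$ in the component of $s$ with $g(\bar{t}) \mid f(s')$, take $s'$ minimal, then choose $t'$ \emph{maximal} with $\bar{t} \mid t'$ and $g(t') \mid f(s')$, and set $\xi'=\xi$ under the identification of cyclic groups. The resulting $(s',t',\xi)$ lies in $f_\otimes^* T$ and in the correct connected component, even though it need not be divisible by $(s,t,\xi)$. Your opening observations about the ratios $|s|/|t|$ and $|f(s)|/|g(t)|$ being constant on components, and your finiteness argument (reducing to finiteness of $\widehat{f^{-1}}$ and of $C_m$), are fine and consistent with the paper, which omits the finiteness check as routine.
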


\begin{proof}
Suppose $(s,t,\xi) \in f^*_\otimes T$ and that $\bar{t} \in T$ is in the same connected component as $t$. We need to find $(s',t',\xi') \in f^*_\otimes T$ in the same connected component as $(s,t,\xi)$ with $\bar{t} \mid t'$.

Because $f$ is an $N$-map, we get an $s' \in S$ with $g(\bar{t}) \mid f(s')$. We can assume that $s'$ is minimal. Then we define $t'$ to be maximal with the property that $\bar{t} \mid t'$ and $g(t') \mid f(s')$. Then (after identifying $C_m$ and $C_{m'}$), $(s',t',\xi) \in f^*_\otimes T$ is the desired element.

Verifying the finiteness condition is straightforward and we omit it.
\end{proof}

\begin{lemma}
With assumptions as in Lemma \ref{l:pullbackofNmapisnmap}, the composite
\[
 \bW_S(k) \xto{f_\otimes} \bW_A(k) \xto{g^*} \bW_T(k)
\]
is equal to the composite
\[
 \bW_S(k) \xto{(g')^*} \bW_{f_\otimes^* T}(k) \xto{f'_\otimes} \bW_T(k).
\]
\end{lemma}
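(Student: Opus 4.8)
The plan is to reduce the statement to a computation on ghost coordinates and then to a purely combinatorial identity indexed by $\widehat{(f')^{-1}}(t)$. Both composites are natural in $k$ and are assembled from the maps of Section~\ref{s:genWitt}, so it suffices to check that they agree on the universal Witt vector $(a_s)\in\bW_S(\bZ[a_s]_{s\in S})$, over which the ghost map is injective; hence it is enough to compare ghost coordinates. Chasing $\langle x_s\rangle=w(a_s)$ through Definitions~\ref{d:RFdiagmap} and~\ref{d:Ntimesmap}, and using that $g'(s,t,\xi)=s$ and $|(s,t,\xi)|=\gcd(|s|,|t|)$ in $f_\otimes^* T$, the composite $g^*\circ f_\otimes$ sends $\langle x_s\rangle$ to $\langle z_t\rangle$ with $z_t=\prod_{s\in\widehat{f^{-1}}(g(t))}x_s^{|g(t)|/|s|}$, while $f'_\otimes\circ(g')^*$ sends it to $\langle z'_t\rangle$ with $z'_t=\prod_{(s,t',\xi)\in\widehat{(f')^{-1}}(t)}x_s^{|t|/\gcd(|s|,|t'|)}$. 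So everything comes down to the identity $z_t=z'_t$ for each $t\in T$.

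The heart of the matter is an explicit description of $\widehat{(f')^{-1}}(t)$. I would show that the projection $(s,t',\xi)\mapsto s$ restricts to a surjection $\widehat{(f')^{-1}}(t)\twoheadrightarrow\widehat{f^{-1}}(g(t))$ whose fibre over $\sigma$ consists of exactly $m_\sigma:=\gcd\!\big(\frac{|f(\sigma)|}{|\sigma|},\frac{|g(t'_\sigma)|}{|t'_\sigma|}\big)$ elements $(\sigma,t'_\sigma,\xi)$, all with one and the same second coordinate $t'_\sigma$ and differing only in the coordinate $\xi\in C_{m_\sigma}$. For surjectivity, given $\sigma\in\widehat{f^{-1}}(g(t))$ one chooses $t'_\sigma$ maximal in $T$ with $g(t'_\sigma)\mid f(\sigma)$ and $t\mid t'_\sigma$; the hypothesis of Definition~\ref{d:multpullback2} forces $t'_\sigma$ to be the unique such element, $\sigma$ is still minimal with $g(t'_\sigma)\mid f(\sigma)$ because enlarging $g(t)$ to $g(t'_\sigma)$ only divides down the quotient $|f(\sigma)|/|g(t')|$ that governs minimality, so $(\sigma,t'_\sigma,\xi)\in f_\otimes^* T$ and minimality of $\sigma$ makes it minimal in $\widehat{(f')^{-1}}(t)$. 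Conversely, if $(s,t',\xi)\in\widehat{(f')^{-1}}(t)$ then $g(t)\mid g(t')\mid f(s)$, and if $s$ were not minimal with $g(t)\mid f(s)$ then Lemma~\ref{l:minimaldiv} together with the uniqueness of the partner above $t$ would produce a strictly smaller element of $\widehat{(f')^{-1}}(t)$, contradicting minimality; hence $s\in\widehat{f^{-1}}(g(t))$. Finally, the partial order on $f_\otimes^* T$ requires equality of the cyclic-group coordinates, so the $|C_{m_\sigma}|$ choices of $\xi$ give pairwise incomparable elements, which accounts for the multiplicity.

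With this in hand, grouping the product defining $z'_t$ according to fibres gives $z'_t=\prod_{\sigma\in\widehat{f^{-1}}(g(t))}x_\sigma^{\,m_\sigma|t|/\gcd(|\sigma|,|t'_\sigma|)}$, so it remains to prove the numerical identity $\frac{|g(t)|}{|\sigma|}=m_\sigma\cdot\frac{|t|}{\gcd(|\sigma|,|t'_\sigma|)}$ for each $\sigma$. Writing $n=|f(s)|/|s|$, constant on the component of $S$ by Lemma~\ref{l:decomposeNmap}, and $r=|g(t')|/|t'|$, constant on the component of $T$ because $g$ is an $R$-map, one has $m_\sigma=\gcd(n,r)$, $|g(t)|=r|t|$ and $|f(\sigma)|=n|\sigma|$, so the identity becomes $r\,\gcd(|\sigma|,|t'_\sigma|)=\gcd(n,r)\,|\sigma|$; I would verify this one prime at a time, using the minimality relation $\gcd\!\big(|\sigma|,|f(\sigma)|/|g(t'_\sigma)|\big)=1$ and $n|\sigma|=\frac{|f(\sigma)|}{|g(t'_\sigma)|}\cdot r\cdot|t'_\sigma|$, via a short case split on the valuations $\nu_p(|\sigma|)$, $\nu_p(|t'_\sigma|)$, $\nu_p(n)$, $\nu_p(r)$. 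Together with the fibre description this yields $z'_t=z_t$.

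The main obstacle is the description of $\widehat{(f')^{-1}}(t)$ in the second paragraph. The subtlety is genuine: $f_\otimes^* T$ is a truncation poset only when the condition in Definition~\ref{d:multpullback2} holds, and it is precisely that condition — through the resulting uniqueness of the partner $t'_\sigma$ lying above $t$ — which guarantees that the projection to $\widehat{f^{-1}}(g(t))$ is a fibration with fibres of the expected size; Example~\ref{ex:notcommutingRpastN} illustrates how the identity would fail without it. The reduction to ghost coordinates and the prime-by-prime identity are then routine.
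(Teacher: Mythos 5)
Your proposal is correct and follows essentially the same route as the paper's proof: reduce to ghost coordinates, identify $\widehat{(f')^{-1}}(t)$ with $\widehat{f^{-1}}(g(t))$ up to the multiplicity $m=\gcd\big(\tfrac{|f(s)|}{|s|},\tfrac{|g(t)|}{|t|}\big)$ coming from the $C_m$-coordinate, and then check the resulting numerical identity on exponents. You simply supply more detail (uniqueness of $t'_\sigma$ via Definition \ref{d:multpullback2} and the prime-by-prime verification) than the paper, which asserts these steps without elaboration.
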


\begin{proof}
It suffices to show that the two maps agree on ghost coordinates. Suppose the first composite sends $\langle x_s \rangle$ to $\langle y_t \rangle$ and the second composite sends it to $\langle y_t' \rangle$. We find that
\[
 y_t = \prod_{s \in \widehat{f^{-1}}(g(t))} x_s^{|g(t)|/|s|},
\]
while
\[
 y_t' = \prod_{(s,t',\xi) \in \widehat{(f')^{-1}}(t)} x_s^{|t|/|(s,t',\xi)|}
\]
The point is that if $(s,t',\xi)$ is in $\widehat{(f')^{-1}}(t)$ then $s \in \widehat{f^{-1}}(g(t))$, and conversely, if $s \in \widehat{f^{-1}}(g(t))$ then there is a unique $t' \in T$ with $(s,t', \xi) \in \widehat{(f')^{-1}}(t)$ for any $\xi \in C_m$. Hence
\[
 y_t' = \prod_{s \in \widehat{f^{-1}}(g(t))} \Big[ x_s^{|t|/\gcd(|s|,|t'|)} \Big]^{\gcd(|f(s)|/|s|,|g(t)|/|t|)}.
\]
Hence it suffices to show that
\[
 st\gcd\Big(\frac{|f(s)|}{|s|}, \frac{|g(t)|}{|t|}\Big) = |g(t)| \gcd(|s|,|t'|).
\]
But this is easily verified using that $\gcd(|f(s)|/|g(t)|, |s|)=1$ and $\gcd(|s|,|t'|)=\gcd(|s|,|t|)$.
\end{proof}

This finishes our discussion of the composite of an $N$-map followed by an $R$-map. There is one more thing to do, namely to describe the composite of a $T$-map followed by an $N$-map. If we think of a $T$-map as addition and an $N$-map as multiplication this can be thought of as a distributivity law. This is somewhat combinatorial.

\begin{definition} \label{d:commuteTN}
Suppose $f : S \to A$ is a $T$-map and $g : A \to T$ is an $N$-map. Then we declare the composite $g_\otimes \circ f_\oplus$ to be equal to the composite $t_\oplus \circ n_\otimes \circ r^*$, where the maps $t$, $n$ and $r$ are defined by the exponential diagram
\[ \xymatrix{
 S \ar[d]_f & E \ar[l]_-r \ar[r]^-n & D \ar[d]^\tau \\
 A \ar[rr]^-g & & T
} \]
\end{definition}

It remains to say what we mean by an exponential diagram. Our definition is dictated by the proof of Lemma \ref{l:Wittofexp} below. We start with some auxiliary definitions. First, let
\[
 \wh{A} = \coprod_{t \in T} \coprod_{a \in \wh{g^{-1}}(t)} \{(t, a, \xi) \quad | \quad  \xi \in C_t/C_a \}
\]
and let $\wh{A}_t \subset \wh{A}$ be the subset whose first coordinate is $t$. Similarly, let
\[
 \wh{S} = \coprod_{t \in T} \coprod_{f(s) \in \wh{g}^{-1}(t) } \{(t, s, \zeta) \quad | \quad \zeta \in C_t/C_s \}
\]
and let $\wh{S}_t$ be the subset whose first coordinate is $t$. Let $\pi : \wh{S} \to S$ be the projection onto $S$.

We say $(t', a', \xi') \mid (t, a, \xi)$ in $\wh{A}$ if the following conditions hold. First, $t' \mid t$. Second, $a' \mid a$. And third, $\xi \mapsto \xi'$ under the quotient map $C_t/C_a \to C_{t'}/C_{a'}$. (Recall from Lemma \ref{l:minimaldiv} that for each $a \in \wh{g^{-1}}(t)$ there is exactly one $a' \in \wh{g^{-1}(t')}$ with $a' \mid a$, and that $\frac{|t'|}{|a'|}$ divides $\frac{|t|}{|a|}$; this last condition gives the quotient map $C_t/C_a \to C_{t'}/C_{a'}$.)

Similarly, we say that $(t', s', \zeta') \mid (t, s, \zeta)$ in $\wh{S}$ if the following conditions hold. First, $t' \mid t$. Second, $s' \mid s$. And third, $\zeta \mapsto \zeta'$ under the quotent map $C_t/C_s \to C_{t'}/C_{s'}$.

We have a map
\[
 \hat{f} : \wh{S} \to \wh{A} \qquad (t, s, \zeta) \mapsto (t, f(s), \bar{\zeta}),
\]
where $\bar{\zeta}$ is the image of $\zeta \in C_t/C_s$ under the quotient map $C_t/C_s \to C_t/C_{f(s)}$. Let
\[
 D' = \{(t, \sigma) \quad | \quad t \in T, \sigma : \wh{A}_t \to \wh{S}_t \textnormal{ section of $\hat{f}|_{\wh{S}_t}$} \}
\]
and let $D'_t$ be the subset whose first coordinate is $t$. 

We say that $(t', \sigma') \mid (t, \sigma)$ in $D'$ if the following conditions hold. First, $t' \mid t$. And second, if $(t', a', \xi') \mid (t, a, \xi)$ in $\wh{A}_t$ then $\sigma'(t', a', \xi') \mid \sigma(t, a, \xi)$ in $\wh{S}_t$.

Note that on ghost coordinates the composite $g_\otimes \circ f_\oplus$ is given on the $t$'th coordinate by a sum indexed over $D'_t$ as in the proof of Lemma \ref{l:Wittofexp} below. But this sum is too big, because the map $t_\oplus : \bW_D(k) \to \bW_T(k)$ also has a coefficient of $\frac{|t|}{|(t, \sigma)|}$. We fix this by letting $C_t$ act on $D'_t$ by conjugating the section and defining
\[
 D = \coprod_{t \in T} D_t'/C_t.
\]
We say that $(t', [\sigma']) \mid (t, [\sigma])$ in $D$ if $(t', \sigma') \mid (t, \sigma)$ in $D'$ for some choice of representatives, and we say that $(t, [\sigma]) \in D$ is divisible by $e$ if a representative $(t, \sigma) \in D'$ is divisible by $e$ in the sense that there is some $(t', \sigma')$ in $D'$ with $(t', \sigma') \mid (t, \sigma)$ and $\frac{|t|}{|t'|} = e$. We then define $|(t, [\sigma])|$ to be the largest $e$ such that $(t, [\sigma])$ is divisible by $e$.

\begin{lemma}
With the above definitions, $D$ is a truncation poset and the map $\tau : D \to T$ given by projection onto the first coordinate is a $T$-map.
\end{lemma}

\begin{proof}
The hardest part is verifying that $\tau : D \to T$ is a $T$-map. Given $(t, [\sigma]) \in D$ and $t' \in T$ with $t \mid t'$ we need to produce a section $\sigma' : \wh{A}_{t'} \to \wh{S}_{t'}$ with $(t, [\sigma]) \mid (t', [\sigma'])$. Pick a representative $\sigma$ for $[\sigma]$. For $a \in \wh{g^{-1}}(t)$, let $a' \in \wh{g^{-1}}(t')$ be the corresponding element as in Lemma \ref{l:minimaldiv}. For each $\xi' \in C_{t'}/C_{a'}$, let $\xi \in C_t/C_a$ be the image of $\xi'$ and let $s = \pi(\sigma(a,\xi))$. Since $f$ is a $T$-map, we get an $s' \in f^{-1}(a')$ with $s \mid s'$. We can then define $\sigma'(a', \xi') = (s', \zeta')$ where $\zeta' \in C_{t'}/C_{s'}$ is the pullback of $\xi'$ and $\zeta$ in the diagram
\[ \xymatrix{
 C_{t'}/C_{s'} \ar[r] \ar[d] & C_t/C_s \ar[d] \\
 C_{t'}/C_{a'} \ar[r] & C_t/C_a
} \]
(To verify that this is a pullback diagram, use that $\gcd(\frac{|t'|/|a'|}{|t|/|a|}, \frac{|a|}{|s|}) = 1$.)
\end{proof}

The definition of $E$ is similar. We start with
\[
 E' = \{(t, \sigma, a, \xi) \quad | \quad (t, \sigma) \in D', a \in \wh{g^{-1}}(t), \xi \in C_t/C_a \}. 
\]
As usual let $E'_t$ be the subset whose first coordinate is $t$. Let $C_t$ act on $E'_t$ by $\xi_t \cdot (t, \sigma, a, \xi) = (t, {}^{\xi_t} \sigma, a, \xi_t \cdot \xi)$ and define $E = \coprod_{t \in T} E'_t/C_t$. Then the map $r' : E' \to S$ defined by $r'(t, \sigma, a, \xi) = \pi(\sigma(a,\xi))$ induces a well defined map $r : E \to S$, and of course we have a map $n : E \to D$ that forgets $(a,\xi)$.

\begin{lemma}
With the above definitions, $E$ is a truncation poset, the map $r : E \to S$ is an $R$-map, and the map $n : E \to D$ is an $N$-map.
\end{lemma}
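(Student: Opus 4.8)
The plan is to verify, for $D$ (the argument for $E$ being entirely parallel), the four conditions of Definition \ref{d:gentrun}, and then to check the defining properties of $r$ as an $R$-map, $n$ as an $N$-map, and $t$ as a $T$-map. To keep the combinatorics under control I would first invoke Lemmas \ref{l:decomposeTmap} and \ref{l:decomposeNmap} to reduce to the case where, on each connected component, $f$ and $g$ are multiplication by an integer; then each fibre $f^{-1}(a)$ has exactly one element for each component of $S$ lying over the component of $a$, and $\widehat{g^{-1}}(t)$ has exactly one element for each component of $A$ lying over the component of $t$, which is a finite set by Lemma \ref{l:decomposeNmap}. Two bookkeeping facts should be recorded up front: for $a \in \widehat{g^{-1}}(t)$ one has $|a| \bigm| |t|$ (combine the minimality of $a$ with the fact that $|g(a)| = n_g |a|$ for the local degree $n_g$ of $g$, in the spirit of the discussion following Definition \ref{d:Nmap}), and for $s \in f^{-1}(a)$ one has $|s| \bigm| |a|$; these make the cyclic groups $C_{t/a}$ and $C_{a/s}$ appearing in the definition of $D_t'$ well defined. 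Moreover the surjections $C_{t'/a'} \twoheadrightarrow C_{t/a}$ attached to Lemma \ref{l:minimaldiv} and the identifications $C_{a/s_{a,\xi}} \cong C_{a'/s'_{a',\xi'}}$ used in the divisibility relation on $D$ make sense: the latter because $s_{a,\xi}\mid s'_{a',\xi'}$, together with $f$ being a map of truncation posets, forces $|a'|/|s'_{a',\xi'}| = |a|/|s_{a,\xi}|$.

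With this in place I would check the axioms of Definition \ref{d:gentrun} for $D$. First, $|-|$ is well defined: for $d=(t,(s_{a,\xi},\zeta_{a,\xi}))$ the set of $e$ admitting a divisor of $d$ with $t$-part $t/e$ contains $e=1$, is bounded above by $|t|$, and is closed under least common multiples --- the last point being where one uses that the divisibility relation forces the $s$- and $\zeta$-data of a divisor to be determined by its $t$-part --- so the maximum exists. Conditions (1) and (2) then follow at once, since along a chain in $D$ the $t$-parts satisfy (1) and (2) in $T$ and the cyclic data transforms compatibly. For (3), given $d$ over $t$ and $e\mid|d|$ I would write down the required unique divisor of norm $|d|/e$ explicitly: pass to $t/e$ in $T$, transport $\widehat{g^{-1}}(t)$ to $\widehat{g^{-1}}(t/e)$ by Lemma \ref{l:minimaldiv}, and divide each $s_{a,\xi}$ by the appropriate local amount --- this is exactly the bookkeeping carried out in the proof of the cardinality lemma immediately preceding this statement --- with uniqueness at each stage coming from uniqueness in $T$ and in $S$. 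For (4), an element above $d$ with prescribed norm is determined by its $t$-part (condition (4) in $T$), then by its $s_{a,\xi}$-data (condition (4) in $S$, applied to each $s_{a,\xi}\mid s'_{a',\xi'}$), and then the $\zeta$-data is forced by the identification of cyclic groups, so there is at most one.

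For the maps: once (3) is established, $d_1\mid d_2$ in $D$ forces $d_1 = d_2/e$ with $e = |d_2|/|d_1|$, hence $t(d_1) = t(d_2)/e$ and $|t(d_2)|/|t(d_1)| = e = |d_2|/|d_1|$, so $t$ is a map of truncation posets; it is a $T$-map because the fibration condition for $f$ lets one extend any tuple over $t$ to a tuple over any $t'$ with $t\mid t'$ (the norm identities needed for the $\zeta$'s to match are automatic since $f$ is a map of truncation posets), and the fibre $t^{-1}(t') = D_{t'}$ is finite. That $r:E\to S$ is a map of truncation posets is read off directly from the construction of $E$, and $n:E\to D$ is an $N$-map because its strong fibration and finiteness conditions follow from the corresponding ones for the $N$-map $g$, the extra $b$- and $\xi$-data being merely carried along. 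Running the same axiom check on $E$ shows $E$ is a truncation poset.

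I expect the main obstacle to be not any single step but the interaction of the three cyclic groups $C_{t/a}$, $C_{a/s}$ and $C_t$: one must be sure that the function $|-|$ on $D$, defined as a maximum rather than by a closed formula, is genuinely compatible with the divisibility relation (condition (3)), and that the nonempty pieces $D_t$ assemble correctly. Concretely, a connected component of $D$ lying over $t$ descends only to $t/|d|$, not all the way to the bottom of the component of $t$ in $T$, so one must check that $D_{t/e}$ is nonempty throughout that range in order for the norm-$1$ element required by condition (3) to exist.
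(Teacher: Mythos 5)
The paper offers no argument for this lemma at all --- it is dismissed as ``tedious but straightforward'' --- so there is no authorial proof to compare against; your outline is a genuine attempt to supply the missing verification, and its overall structure (reduce to components via Lemmas \ref{l:decomposeTmap} and \ref{l:decomposeNmap}, record the divisibilities $|s| \mid |a| \mid |t|$ that make the cyclic groups well defined, check the four axioms of Definition \ref{d:gentrun} for $D$, then read off the properties of $r$, $n$, $t$) is the right one. Two points deserve tightening. First, a small imprecision: for a $T$-map $f$ the fibre $f^{-1}(a)$ has \emph{at most} one element in each component of $S$ mapping to the component of $a$ (it is empty when the local degree does not divide $|a|$), not exactly one; nothing downstream depends on this, since you only use finiteness, $|s| \mid |a|$, and the uniqueness of the lift $s' \in f^{-1}(a')$ above a given $s \in f^{-1}(a)$. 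Second, the crux of the whole verification is the assertion that the set of depths $e$ at which $d$ admits a divisor is closed under least common multiples \emph{and} under passing to divisors of its elements --- both are needed for $|-|$ to satisfy condition (3), since the divisors of $d$ must occur at exactly the depths dividing the maximum. You assert the lcm-closure and defer the rest to the cardinality lemma; the cleanest way to get both at once is to push the translation all the way through: a divisor of $d$ at depth $e$ exists if and only if the corresponding cyclic subgroup of $C_t$ fixes a representative of $d$ in $D_t'$, so the set of admissible depths is the set of divisors of the order of the (necessarily cyclic) stabilizer, which is automatically a divisor-closed, lcm-closed set. That identification is exactly what the cardinality lemma records, so leaning on it is legitimate, but it should be stated as the mechanism rather than left implicit. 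Your closing worry about $D_{t/e}$ being nonempty throughout the relevant range is moot: the divisor at depth $e$ is constructed explicitly as an element of $D_{t/e}$, so nonemptiness is a consequence, not a hypothesis.
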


We omit the proof as it is a tedious but straightforward verification.

\begin{lemma} \label{l:Wittofexp}
Suppose we are given an exponential diagram as above. Then the composite
\[
 \bW_S(k) \xto{f_\oplus} \bW_A(k) \xto{g_\otimes} \bW_T(k)
\]
is equal to the composite
\[
 \bW_S(k) \xto{r^*} \bW_E(k) \xto{n_\otimes} \bW_D(k) \xto{t_\oplus} \bW_T(k).
\]
\end{lemma}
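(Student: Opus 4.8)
The plan is to verify the identity on ghost coordinates, which suffices: both composites are natural transformations $\bW_S(-) \Rightarrow \bW_T(-)$, so it is enough to check that they agree on the universal Witt vector $(a_s) \in \bW_S(k)$ for $k = \bZ[a_s]_{s \in S}$, and over this torsion-free ring the ghost map $w$ is injective. So I fix $\langle x_s \rangle \in k^S$ and compute the $t$-th coordinate of the image under each side, for $t \in T$.

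For the left-hand composite, applying $f^w_\oplus$ and then $g^w_\otimes$ gives
\[
 \big( g^w_\otimes \circ f^w_\oplus \langle x_s \rangle \big)_t = \prod_{a \in \widehat{g^{-1}}(t)} \Big( \sum_{s \in f^{-1}(a)} \tfrac{|a|}{|s|} x_s \Big)^{|t|/|a|}.
\]
I would then expand each factor: writing the $(|t|/|a|)$-th power of the inner sum as a sum over functions $C_{t/a} \to f^{-1}(a)$, $\xi \mapsto s_{a,\xi}$, and taking the product over $a$, one gets a sum over tuples $(s_{a,\xi})$ indexed by $a \in \widehat{g^{-1}}(t)$ and $\xi \in C_{t/a}$ with $s_{a,\xi} \in f^{-1}(a)$, with coefficient $\prod_{a,\xi} \frac{|a|}{|s_{a,\xi}|}$. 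Since $|C_{a/s_{a,\xi}}| = |a|/|s_{a,\xi}|$, this coefficient is exactly the number of tuples $(\zeta_{a,\xi})$ with $\zeta_{a,\xi} \in C_{a/s_{a,\xi}}$, so absorbing it into an extra summation identifies the $t$-th coordinate of the left-hand side with $\sum_{d' \in D'_t} \prod_{a,\xi} x_{s_{a,\xi}(d')}$.

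For the right-hand composite, $r^*_w$ sends $\langle x_s \rangle$ to $\langle x_{r(e)} \rangle$, and then $n^w_\otimes$ and $t^w_\oplus$ yield
\[
 \big( t^w_\oplus \circ n^w_\otimes \circ r^*_w \langle x_s \rangle \big)_t = \sum_{d \in t^{-1}(t)} \frac{|t|}{|d|} \prod_{e \in \widehat{n^{-1}}(d)} x_{r(e)}^{|d|/|e|}.
\]
To reconcile the two formulas I would first invoke the cardinality lemma preceding this one: the summand $\prod_{a,\xi} x_{s_{a,\xi}}$ is manifestly invariant under the $C_t$-action on $D'_t$ (which only permutes the $\xi$'s within each $a$-block and twists the $\zeta$'s), so $\sum_{d' \in D'_t}(-) = \sum_{d \in D_t} \frac{|t|}{|d|}(-)$, and $D_t$ is precisely the fiber $t^{-1}(t)$ of the map $D \to T$. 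It then remains to prove, for each $d = (t, (s_{a,\xi}, \zeta_{a,\xi})) \in D_t$, the local identity $\prod_{e \in \widehat{n^{-1}}(d)} x_{r(e)}^{|d|/|e|} = \prod_{a,\xi} x_{s_{a,\xi}}$. I expect this to be the main obstacle: it requires unwinding the definitions of $E$, $n$ and $r$ to show that $\widehat{n^{-1}}(d)$ is indexed by the $C_t$-orbits on $\coprod_{a \in \widehat{g^{-1}}(t)} C_{t/a}$ (equivalently, by an $a$ together with a coset of the appropriate cyclic subgroup), that $r$ carries the element attached to an orbit to the corresponding $s_{a,\xi}$, and that $|d|/|e|$ equals the size of that orbit, so that the exponents on both sides match. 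Lemma \ref{l:minimaldiv} (compatibility of $\widehat{g^{-1}}$ with divisibility) together with the cardinality lemma are the tools that make this bookkeeping go through; the remaining steps are routine manipulations of the ghost-coordinate formulas.
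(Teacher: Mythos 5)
Your proposal is correct and follows essentially the same route as the paper's proof: compute on ghost coordinates over the universal ring, expand the product of powered sums as a sum over $D_t'$ (absorbing the coefficients $|a|/|s_{a,\xi}|$ into the $\zeta$-indices), and pass to $D_t$ via the cardinality lemma. The final ``local identity'' you flag as the main remaining obstacle is precisely the step the paper itself disposes of with ``by inspection,'' and your description of how $\widehat{n^{-1}}(d)$, $r$, and the exponents $|d|/|e|$ organize into the product over $(a,\xi)$ is the correct way to carry that inspection out.
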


\begin{proof}
We can compute using ghost coordinates. Suppose the first composite sends $\langle x_s \rangle$ to $\langle y_t \rangle$ and the second composite sends $\langle x_s \rangle$ to $\langle y_t' \rangle$. Also let $\langle z_e \rangle = r^*_w \langle x_s \rangle$. We compute
\begin{eqnarray*}
 y_t & = & \prod_{a \in \wh{g^{-1}}(t)} \Big( \sum_{s \in f^{-1}(a)} \frac{|a|}{|s|} x_s \Big)^{|t|/|s|} \\
  & = & \prod_{(a, \xi) \in \wh{A}_t} \sum_{(s,\zeta) \in \hat{f}^{-1}(a,\xi)} x_{\pi(s,\zeta)} \\
  & = & \sum_{\sigma : \wh{A}_t \to \wh{S}_t} \prod_{(a,\xi) \in \wh{A}_t} z_{(t, \sigma, a, \xi)} \\
  & = & \sum_{[\sigma]} \frac{|t|}{|(t,[\sigma])|} \prod_{(a,\xi) \in \wh{A}_t} z_{(t, \sigma, a, \xi)},
\end{eqnarray*}
and this last expression is equal to $y_t'$ by inspection.
\end{proof}

Putting all of this together we have proved the following, which is a restatement of Theorem \ref{t:main}:

\begin{theorem} \label{t:mainbody}
Let $k$ be a commtuative ring. There is a functor
\[
 \bW(k) : \TP^{TNR} \to Set
\]
sending $S$ to $\bW_S(k)$, such that the composite $\TP^{TR} \to \TP^{TNR} \to Set$ agrees with the functor in Theorem \ref{t:TRfunctor} and the composite $\TP^N \to \TP^{TNR} \to Set$ agrees with the functor in Definition \ref{d:Ntimesmap}.
\end{theorem}

\section{The subcategory $\TP^{TNR}_\join$ and bispans} \label{s:bispans}
Motivated by Lemma \ref{l:joinsimpliespullback} we define a subcategory of $\TP^{TNR}$ as follows.

\begin{definition}
Let $\TP^{TNR}_\join$ be the category whose objects are truncation posets with join, and whose morphisms are generated by equivalence classes of morphisms of truncation posets with join in the same way as in Definition \ref{d:TrunTNR}.
\end{definition}

To make sense of this we should verify that the composite of two morphisms in $\TP^{TNR}_\join$ is still in $\TP^{TNR}_\join$. In other words, we should check that starting with truncation posets with joint, the truncation posets $f_\oplus^* T$ and $f_\otimes^* T$, as well as the truncation posets in the definition of an exponential diagram, all have join as well. This is straightforward and we omit it.

\begin{proposition}
Any map in $\TP^{TNR}_\join$ can be written uniquely, up to isomorphism of bispans, as a composite $h_\oplus \circ g_\otimes \circ f^*$ for a diagram
\[
 S \xfrom{f} A \xto{g} B \xto{h} T,
\]
where $A$ and $B$ are in $\TP^{TNR}_\join$ and $f \in \TP^R$, $g \in \TP^N$ and $h \in \TP^T$.
\end{proposition}

\begin{proof}
The only thing left to prove is that the category does not collapse further. This follows as in the proof of Proposition \ref{p:TPTRuniquespan} by noting that inequivalent bispans give different maps on Witt vectors of $\bZ$.
\end{proof}

Finally, we compare our construction to ``classical'' Tambara functors for cyclic groups.

\begin{definition} \label{d:TNF}
Let $\TP^{TNF}_{\langle n \rangle}$ be the subcategory of $\TP^{TNR}_\join$ whose objects are finite disjoint unions of $\langle m \rangle$ for $m \mid n$ and whose morphisms are given by equivalence classes of bispans
\[
 S \xfrom{f} A \xto{g} B \xto{h} T
\]
of such, with the following additional requirement: Suppose we decompose $A$ and $S$ as $\coprod A_i$ and $\coprod S_j$, respectively, and write $f$ as a coproduct of maps $A_i \xto{f_i} S_j \subset S$. If $A_i = \langle m_i \rangle$ and $S_j = \langle m_j \rangle$ then $m_i \mid m_j$ and $f_i$ is multiplication by $m_j/m_i$.
\end{definition}

Note that the maps $g : A \to B$ and $h : B \to T$ automatically satisfy the condition in Definition \ref{d:TNF}.

We obviously have a functor
\[
 \bW(k) : \TP^{TNF}_{\langle n \rangle} \to Set
\]
obtained by restricting the functor $\bW(k)$ from Theorem \ref{t:mainbody}. Now we can compare this to Tambara functors because of the following.

\begin{proposition}
There is an equivalence of categories between $\TP^{TNF}_{\langle n \rangle}$ and the category of bispans of finite $C_n$-sets.
\end{proposition}

\begin{proof}
The equivalence is given on objects by sending the truncation poset $\langle m \rangle$ to the finite $C_n$-set $C_n/C_m$ and sending disjoint unions to disjoint unions. We send a map $\frac{m_2}{m_1} : \langle m_1 \rangle \to \langle m_2 \rangle$ to the quotient map $C_n/C_{m_1} \to C_n/C_{m_2}$.

To finish the proof we should verify that the composition laws for the two types of bispans agree. This is straightforward and we omit it.
\end{proof}

\end{document}